\DeclareMathAlphabet{\mathpzc}{OT1}{pzc}{m}{it}
\newcommand{\Spec}{\text{Spec}}
\newcommand{\Spf}{\text{Spf }}
\newcommand{\Hom}{\text{Hom}}
\newcommand{\Lie}{\text{Lie}}
\newcommand{\GL}{\text{GL}}
\newcommand{\Ext}{\text{Ext}}
\newcommand{\Fil}{\text{Fil}}
\newcommand{\Isom}{\textbf{Isom}}
\newcommand{\Res}{\text{Res}}
\newcommand{\gr}{\text{gr}}
\newcommand{\rig}{\text{rig}}
\newcommand{\univ}{\text{univ}}
\newcommand{\et}{\text{\'et}}
\newcommand{\smooth}{\text{sm}}
\newcommand{\hooklongrightarrow}{\lhook\joinrel\longrightarrow}
\newcommand{\Z}{\mathbb{Z}}
\newcommand{\Q}{\mathbb{Q}}
\newcommand{\C}{\mathbb{C}}
\newcommand{\F}{\mathbb{F}}
\newcommand{\Gm}{\mathbb{G}_m}
\newcommand{\inv}{^{-1}}
\newcommand{\D}{\mathbb{D}}
\newcommand{\module}{\Lambda}
\newcommand{\genring}{R}
\newcommand{\genscheme}{S}
\newcommand{\sheafof}[1]{\mathcal{O}_{#1}}
\newcommand{\isommatchingtensors}{\mathcal{P}}
\newcommand{\vectorbundle}{\mathscr{E}}
\newcommand{\dieudonnemodule}{M}
\newcommand{\completion}{\widehat}
\newcommand{\genqisog}{\iota}
\newcommand{\levi}{L}
\newcommand{\parabolic}{P}
\newcommand{\unipotent}{U}
\newcommand{\unram}{\text{un}}
\newcommand{\completemaxunram}{\breve{\Q}_p}
\newcommand{\completemaxunramint}{\breve{\Z}_p}
\newcommand{\BT}{X}
\newcommand{\deform}{\mathscr}
\newcommand{\lift}{\mathcal}
\newcommand{\liftlow}{\textbf}
\newcommand{\liftother}{\hat}
\newcommand{\DeligneLusztig}[3]{X_{\{#2\}}^{#1}([#3])}
\newcommand{\Def}{\text{Def}}
\newcommand{\Nilp}{\text{Nilp}}
\newcommand{\RZ}{\text{RZ}}
\newcommand{\weildescent}{\Phi}
\newcommand{\weilgroup}[1]{W_{#1}}
\newcommand{\analtower}[1]{#1^\infty}
\newcommand{\gencocharacter}{\lambda}
\newcommand{\newtonmap}{\nu}
\newcommand{\unramext}{E}
\newcommand{\integerring}{\mathscr{O}}
\newcommand{\EL}{\widetilde}
\newcommand{\ELgroup}[1][]{%
\ifthenelse{\isempty{#1}}{{\text{Res}_{\unramext|\mathbb{Q}_p} \GL_n}}{\text{Res}_{\unramext_{#1}|\mathbb{Q}_p} \GL_{n_{#1}}}%
}
\newcommand{\ELgroupintegral}[1][]{%
\ifthenelse{\isempty{#1}}{{\text{Res}_{\integerring|\mathbb{Z}_p} \GL_n}}{\text{Res}_{\integerring|\mathbb{Z}_p} \GL_{#1}}%
}
\newcommand{\generalhodge}{\underline}
\newcommand{\localreflexfield}{E}
\newcommand{\levelatp}{{K_p}}
\newcommand{\tatemodule}[1]{T_p(#1)}
\theoremstyle{theorem}
\newtheorem{theorem}[subsubsection]{Theorem}
\newtheorem{lemma}[subsubsection]{Lemma}
\newtheorem{prop}[subsubsection]{Proposition}
\newtheorem{conj}[subsubsection]{Conjecture}
\newcounter{introtheorem}
\newtheorem*{theorem*}{Theorem}
\newtheorem*{lemma*}{Lemma}
\theoremstyle{definition}
\newtheorem{example}[subsubsection]{Example}
\newenvironment{remark}[1][Remark.]{\begin{trivlist}\item[\hskip \labelsep {\bfseries #1}]}{\end{trivlist}}
\numberwithin{equation}{subsubsection}
\begin{document}

\tikzset{
    node style sp/.style={draw,circle,minimum size=\myunit},
    node style ge/.style={circle,minimum size=\myunit},
    arrow style mul/.style={draw,sloped,midway,fill=white},
    arrow style plus/.style={midway,sloped,fill=white},
}

\title[Harris-Viehmann conjecture for HN-reducible Rapoport-Zink spaces]{Harris-Viehmann conjecture for Hodge-Newton reducible Rapoport-Zink spaces}
\author{Serin Hong}
\address{Department of Mathematics, California Institute of Technology}
\email{shong2@caltech.edu}

\maketitle

\rhead{}


\chead{}


\begin{abstract}
Rapoport-Zink spaces, or more generally local Shimura varieties, are expected to provide geometric realization of the local Langlands correspondence via their $l$-adic cohomology. Along this line is a conjecture by Harris and Viehmann, which roughly says that when the underlying local Shimura datum is not basic, the $l$-adic cohomology of the local Shimura variety is parabolically induced. 

We verify this conjecture for Rapoport-Zink spaces which are Hodge type and Hodge-Newton reducible. The main strategy is to embed such a Rapoport-Zink space into an appropriate space of EL type, for which the conjecture is already known to hold by the work of Mantovan. 
\end{abstract}

\tableofcontents

\section{Introduction}
In \cite{Rapoport-Zink96}, Rapoport and Zink constructed formal moduli spaces of $p$-divisible groups which give rise to local analogues of PEL type Shimura varieties. These spaces, which are now called \emph{Rapoport-Zink spaces}, have played a crucial role in the study of the local Langlands correspondence. Perhaps the most striking example is Harris and Taylor's proof in \cite{Harris-Taylor01} of the local Langlands conjecture for $\GL_n$. A key point of their proof is that one can realize the local Langlands correspondence in the $l$-adic cohomology of Lubin-Tate spaces, which are Rapoport-Zink spaces that parametrize $p$-divisible groups of dimension $1$.

The theory of Rapoport-Zink spaces suggests that it should be possible to realize many cases of the local Langlands correspondence via the $l$-adic cohomology of some local analogues of Shimura varieties. Motivated by this, Rapoport and Viehmann in \cite{Rapoport-Viehmann14} formulated the idea that there should exist a general theory of local analogues of Shimura varieties, which they called \emph{local Shimura varieties}. They described a conjectural form of this theory which starts with a group theoretic datum called a \emph{local Shimura datum} and associates to this datum a tower of analytic spaces which enjoys analogous properties to the properties of Shimura varieties. Shortly after this conjectural formulation, Scholze in his Berkeley lectures \cite{Scholze14} gave a construction of local Shimura varieties in some perfectoid category which he called the category of \emph{diamonds}. 
For local Shimura data that arise from Shimura varieties of Hodge type, W. Kim in \cite{Kim13} constructed Hodge type Rapoport-Zink spaces which may serve as integral models of the corresponding local Shimura varieties.

There are two key cohomology conjectures, namely the Kottwitz conjecture and the Harris-Viehmann conjecture, which predicts how the $l$-adic cohomology of local Shimura varieties should realize the local Langlands correspondence. The Kottwitz conjecture, originally formulated by Kottwitz and introduced by Rapoport in \cite{Rapoport94}, concerns realization of supercuspidal representations when the underlying local Shimura datum is \emph{basic}. The Harris-Viehmann conjecture, originally formulated by Harris in \cite{Harris00} and later modified by Viehmann, gives an inductive formula for the cohomology when the underlying local Shimura datum is not basic.

The primary purpose of this paper is to prove the Harris-Viehmann conjecture for certain local Shimura data that arise from Shimura varieties of Hodge type. We will work in the setting of Rapoport-Zink spaces, as our proof will use previously known results for Rapoport-Zink spaces of PEL type. However, our argument should work as well in the setting of local Shimura varieties constructed by Scholze.

Let us now introduce the notations and terminologies necessary for a precise statement of our result. We fix a prime $p>2$, 
and set up some standard notations as follows: we write $\overline{\F}_p$ and $\overline{\Q}_p$ respectively for a fixed algebraic closure of $\F_p$ and $\Q_p$; $\Q_p^\unram$ for the maximal unramified extension of $\Q_p$; $\C_p$ and $\completemaxunram$ respectively for the $p$-adic completion of $\overline{\Q}_p$ and $\Q_p^\unram$; and $\completemaxunramint$ for the ring of integers of $\completemaxunram$. We also fix an \emph{unramified local Shimura datum of Hodge type}, which is a tuple $(G, [b], \{\mu\})$ consisting of a connected reductive group $G$ over $\Z_p$, a $\sigma$-conjugacy class $[b]$ of elements in $G(\check{\Q}_p)$, and a $G(\completemaxunramint)$-conjugacy class $\{\mu\}$ of cocharacters of $G$ satisfying certain axioms (see \ref{definition of unram local Shimura data of Hodge type} for details). Let $\localreflexfield$ denote the field of definition of $\{\mu\}$, which is an unramified finite extension of $\Q_p$. With a suitable choice of $b \in [b]$, the datum $(G, [b], \{\mu\})$ gives rise to a $p$-divisible group $\BT$ over $\overline{\F}_p$ with some additional structures induced by the group $G$. Let $J_b$ be an algebraic group over $\Q_p$ with functor of points
\[ J_b(R) = \{ g \in G(\genring \otimes_{\Q_p} \completemaxunram) : gb\sigma(g)\inv = b\}\]
for any $\Q_p$-algebra $\genring$.

To the pair $(G, b)$, we associate a Rapoport-Zink space of Hodge type $\RZ_{G, b}$ following W. Kim's construction in \cite{Kim13}. The space $\RZ_{G, b}$ is a formal scheme over $\Spf (\completemaxunramint)$ whose isomorphism class depends only on the datum $(G, [b], \{\mu\})$. It also has a rigid analytic generic fiber $\RZ_{G, b}^\rig$ which is equipped with a tower of \'etale covers $\analtower{\RZ_{G, b}} := \{\RZ_{G, b}^\levelatp\}$ where $\levelatp$ runs over open compact subgroups of $G(\Z_p)$. 
The $l$-adic cohomology groups
\[ H^i(\RZ_{G, b}^\levelatp) := H^i_c (\RZ_{G, b}^\levelatp \otimes_{\completemaxunram} \C_p, \Q_l( \dim \RZ_{G, b}^\levelatp)) \quad \text{ for } i>0\]
fit into a tower $\{ H^i(\RZ_{G, b}^\levelatp)\}$ with a natural action of $G(\Q_p) \times \weilgroup{\localreflexfield} \times J_b(\Q_p)$ where $\weilgroup{\localreflexfield}$ is the Weil group of $\localreflexfield$. For an $l$-adic admissible representation $\rho$ of $J_b(\Q_p)$, we define a virtual representation of $G(\Q_p) \times \weilgroup{\localreflexfield}$
\[ H^\bullet(\RZ^\infty_{G, b})_\rho := \sum_{i, j \geq 0} (-1)^{i+j} \varinjlim_{\levelatp} \Ext^j_{J_b(\Q_p)} (H^i(\RZ_{G, b}^\levelatp), \rho). \]

We prove the Harris-Viehmann conjecture under the assumption that the datum $(G, [b], \{\mu\})$ is \emph{Hodge-Newton reducible}. Roughly speaking, this means that the datum $(G, [b], \{\mu\})$ naturally reduces to a local Shimura datum for some Levi subgroup $\levi$ of $G$. More precisely, there exists a choice of $b \in [b] \cap \levi(\completemaxunram)$ and $\mu \in \{\mu\}$ which factors through $\levi$ such that the tuple $(\levi, [b], \{\mu\})$ is an unramified local Shimura datum of Hodge type (see \ref{definition of hodge-newton reducibility} for details). 

Now we can state our main result as follows:
\begin{theorem*}
Assume that the unramified local Shimura datum of Hodge type $(G, [b], \{\mu\})$ is Hodge-Newton reducible with respect to a parabolic subgroup $\parabolic$ of $G$ with Levi factor $\levi$. Choose $b \in [b] \cap \levi(\completemaxunram)$ which gives rise to a $p$-divisible group over $\overline{\F}_p$ with additional structures induced by $\levi$. For any admissible $\overline{\Q}_l$-representation $\rho$ of $J(\Q_p)$, we have the following equality of virtual representations of $G(\Q_p) \times \weilgroup{\localreflexfield}$:
\[H^\bullet(\RZ^\infty_{G, b})_\rho = \text{Ind}_{\parabolic(\Q_p)}^{G(\Q_p)} H^\bullet(\RZ^\infty_{\levi, b})_\rho.\]
In particular, the virtual representation $H^\bullet(\RZ^\infty_{G, b})_\rho$ contains no supercuspidal representations of $G(\Q_p)$.
\end{theorem*}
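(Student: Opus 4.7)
The strategy, as signaled in the abstract, is to reduce to the EL-type case where the Harris-Viehmann conjecture is already established by Mantovan. Throughout the argument one works with the chosen representative $b \in [b] \cap \levi(\completemaxunram)$ and a cocharacter $\mu$ factoring through $\levi$, as provided by the Hodge-Newton reducibility hypothesis.

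The first step is to construct a suitable EL envelope. Starting from a faithful $\Z_p$-representation of $G$, one obtains a closed embedding $G \hookrightarrow \tilde{G}$ with $\tilde{G}$ of the form $\text{Res}_{\integerring|\Z_p}\GL_n$ (i.e.\ of EL type), chosen so that $\mu$ is the restriction of a minuscule cocharacter $\tilde{\mu}$ of $\tilde{G}$ and so that the Levi $\levi$ sits inside a Levi $\tilde{L}$ of a standard parabolic $\tilde{P}\subset\tilde{G}$. Since Hodge-Newton reducibility is governed by the relative position of the Newton and Hodge polygons, a property that can be read off from any faithful representation, the EL datum $(\tilde{G}, [b], \{\tilde{\mu}\})$ inherits the Hodge-Newton reducibility with respect to $\tilde{L}$.

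The second step is to transfer this structure to the corresponding Rapoport-Zink spaces. By the functorial aspect of Kim's construction, the embedding $G \hookrightarrow \tilde{G}$ induces a closed immersion of formal schemes $\RZ_{G,b} \hookrightarrow \RZ_{\tilde{G}, b}$, equivariant for the natural group actions and compatible with the tower structures, and similarly $\RZ_{\levi, b} \hookrightarrow \RZ_{\tilde{L}, b}$ at the Levi level. Mantovan's theorem then yields the Harris-Viehmann formula at the EL level,
\[H^\bullet(\RZ^\infty_{\tilde{G}, b})_{\tilde{\rho}} = \text{Ind}_{\tilde{P}(\Q_p)}^{\tilde{G}(\Q_p)} H^\bullet(\RZ^\infty_{\tilde{L}, b})_{\tilde{\rho}},\]
valid for any admissible representation $\tilde{\rho}$ of the $J_b$ associated with $\tilde{G}$.

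The main step, and the principal technical obstacle, is to show that this EL-level formula descends along the closed immersions to the equality stated for $G$. Mantovan's proof is geometric: it produces a $\tilde{G}(\Q_p)$-equivariant fibration of (open subspaces of) $\RZ_{\tilde{G}, b}^\rig$ over a product space built from $\RZ_{\tilde{L}, b}^\rig$, constructed via the slope filtration of the universal $p$-divisible group. What must be verified is that the preimage of this fibration under $\RZ_{G,b}^\rig \hookrightarrow \RZ_{\tilde{G}, b}^\rig$ is precisely the analogous fibration over the product space built from $\RZ_{\levi, b}^\rig$. This compatibility reduces to two points: first, that the slope filtration of the universal $p$-divisible group over $\RZ_{G,b}$, whose existence is guaranteed by Hodge-Newton reducibility on the Hodge-type side, is the restriction of the slope filtration over $\RZ_{\tilde{G}, b}$; and second, that the Tate tensors cutting out the Hodge-type structure respect this filtration, so that the reduction to $\levi$ extends integrally over the whole base. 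Once both identifications are in place, parabolic induction from $\tilde{P}$ restricts to parabolic induction from $\parabolic$, and the desired formula follows by matching cohomology towers. The concluding statement on supercuspidal representations is then immediate, since any representation parabolically induced from a proper parabolic contains no supercuspidal constituents.
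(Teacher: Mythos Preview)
Your geometric setup is on target and matches the paper: one embeds $(G,[b],\{\mu\})$ into an EL datum $(\tilde G,[b],\{\tilde\mu\})$ that is Hodge--Newton reducible with respect to $\tilde P,\tilde L$ satisfying $P=\tilde P\cap G$ and $L=\tilde L\cap G$, and one pulls Mantovan's constructions back along the closed immersion $\RZ_{G,b}\hookrightarrow\RZ_{\tilde G,b}$. The two compatibility points you isolate (the slope filtration restricts, and the Tate tensors respect the filtration) are exactly what the paper verifies via the results of \cite{Hong16}.

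The gap is in your concluding step. The EL-level cohomological identity for $\tilde G$ cannot be ``descended along the closed immersions'' to yield the identity for $G$: a closed immersion of rigid spaces gives no useful comparison of compactly supported cohomology, the coefficient representation $\rho$ lives on the $J_b$ attached to $G$ rather than to $\tilde G$, and there is no representation-theoretic sense in which ``parabolic induction from $\tilde P$ restricts to parabolic induction from $P$''. What the paper does instead is use the EL embedding purely as a \emph{construction device}: it defines $\RZ_{P,b}:=\RZ_{\tilde P,b}\times_{\RZ_{\tilde G,b}}\RZ_{G,b}$, builds the rigid-analytic tower $\RZ_{P,b}^\infty$ with its $P(\Q_p)$-action, and then proves directly at the $G$-level the two separate identities
\[
H^\bullet(\RZ^\infty_{L,b})_\rho = H^\bullet(\RZ^\infty_{P,b})_\rho
\qquad\text{and}\qquad
H^\bullet(\RZ^\infty_{G,b})_\rho = \mathrm{Ind}_{P(\Q_p)}^{G(\Q_p)} H^\bullet(\RZ^\infty_{P,b})_\rho.
\]
The first uses that $\pi_1\colon\RZ_{P,b}^\rig\to\RZ_{L,b}^\rig$ is a fibration in balls (together with Mantovan's auxiliary schemes $\RZ_{\tilde P,b}^{(m)}$ pulled back); the second uses that $\pi_2\colon\RZ_{P,b}^\rig\to\RZ_{G,b}^\rig$ is an isomorphism and that the level-$K_p$ covers decompose over $K_p\backslash G(\Q_p)/P(\Q_p)$. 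Mantovan's EL cohomological formula is never invoked as such; only her geometric constructions and the template of her argument are reused. Your sketch should replace ``descend the EL formula'' with this explicit two-step computation on the Hodge-type towers themselves.
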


Let us record some previously known results on the Harris-Viehmann conjecture. The earliest result of this form is Boyer's work in \cite{Boyer99} for Drinfeld's modular varieties. For Rapoport-Zink spaces of PEL type, Mantovan in \cite{Mantovan08} and Shen in \cite{Shen13} verified the conjecture assuming Hodge-Newton reducibility. For local Shimura varieties constructed by Scholze, Hansen in \cite{Hansen16} proves the conjecture for $G=\GL_n$ also under the Hodge-Newton reducibility assumption.

We now briefly sketch our proof of the theorem. Our overall strategy is to prove that the rigid analytic generic fiber of the space $\RZ_{G, b}$ is ``parabolically induced'' from the rigid analytic generic fiber of $\RZ_{\levi, b}$. More precisely, we will construct an analogue of Rapoport-Zink space $\RZ_{\parabolic, b}$ associated to the parabolic subgroup $\parabolic$ and prove the following lemma:
\begin{lemma*}
The rigid analytic generic fibers of $\RZ_{G, b}, \RZ_{\parabolic, b}$ and $\RZ_{\levi, b}$ fit into a diagram
\begin{center}
\begin{tikzpicture}[description/.style={fill=white,inner sep=2pt}]
\matrix (m) [matrix of math nodes, row sep=3em,
column sep=1.5em, text height=1.5ex, text depth=0.25ex]
{  & \RZ_{\parabolic, b}^\rig & \\
\RZ_{\levi, b}^\rig& & \RZ_{G, b}^\rig\\
};
\path[-stealth]
    (m-1-2) edge node [below] {\hspace{.8em}$\pi_1$} (m-2-1)
    (m-2-1) edge[bend left] node [above] {$s$} (m-1-2)
    (m-1-2) edge node [below] {$\pi_2$\hspace{.8em}$ $} (m-2-3);
\end{tikzpicture}
\end{center}
such that
\begin{enumerate}[label=(\roman*)]
\item $s$ is a closed immersion,
\item $\pi_1$ is a fibration in balls,
\item $\pi_2$ is an isomorphism. 
\end{enumerate}
\end{lemma*}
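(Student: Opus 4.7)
The plan is first to construct $\RZ_{\parabolic, b}$ as a formal scheme over $\Spf (\completemaxunramint)$ whose moduli problem classifies deformations of the underlying $p$-divisible group $\BT$ together with a reduction of its crystalline $G$-structure to a $\parabolic$-structure. Concretely, I would mimic W.~Kim's construction of $\RZ_{G, b}$ but enlarge the collection of crystalline tensors so as to carve out $\parabolic$ inside $G$ rather than just $G$ itself. The inclusion of tensor systems then realizes $\RZ_{\parabolic, b}$ as a closed formal subscheme of $\RZ_{G, b}$, which gives $\pi_2$. The projection $\parabolic \twoheadrightarrow \levi$ onto the Levi quotient (equivalently, passing to the associated graded of the parabolic filtration) induces a forgetful morphism $\pi_1 \colon \RZ_{\parabolic, b} \to \RZ_{\levi, b}$, while the Levi inclusion $\levi \hookrightarrow \parabolic$ induces the section $s$. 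Assertion (i) is then immediate from the fact that $\levi \hookrightarrow \parabolic$ is a closed embedding of algebraic groups, combined with the deformation-theoretic interpretation of the two moduli problems.

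The hardest step is (iii). To show that $\pi_2$ is an isomorphism on rigid generic fibers, my plan is to construct an inverse by producing, over every rigid analytic point of $\RZ_{G, b}^\rig$, a canonical $\parabolic$-reduction of its universal $G$-structure. The crucial input is that, under the Hodge-Newton reducibility hypothesis, the cocharacter $\mu$ factors through $\levi$ and the associated slope (Hodge-Newton) filtration extends canonically from the special fiber to the universal $p$-divisible group over $\RZ_{G, b}^\rig$, with stabilizer exactly $\parabolic$. This canonical filtration provides the unique $\parabolic$-reduction, so $\pi_2^\rig$ is bijective on points; combined with the closed immersion property at the formal level and a tangent-space computation, this upgrades $\pi_2^\rig$ to an isomorphism of rigid analytic spaces. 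I would import the crystalline techniques of Mantovan and Shen from the PEL setting into the Hodge type setting via the $G$-tensor formalism.

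For (ii), I would analyze the fiber of $\pi_1^\rig$ over a point of $\RZ_{\levi, b}^\rig$: such a fiber classifies lifts of the given $\levi$-structure to a $\parabolic$-structure with prescribed Levi quotient, i.e.\ extensions of the two graded pieces of the parabolic filtration. These extensions form a torsor under the unipotent radical $\unipotent$ of $\parabolic$, and a standard period map computation identifies the torsor on the rigid generic fiber with an analytic ball of the appropriate dimension. Together with the section $s$, this presents $\pi_1$ as a fibration in balls. The principal obstacle I anticipate is the construction of $\RZ_{\parabolic, b}$ itself and the verification that its tensor-theoretic description interacts correctly with the canonical Hodge-Newton filtration used in (iii); once this foundation is in place, the three assertions should fall out of the Levi decomposition $\parabolic = \levi \ltimes \unipotent$ and the crystalline deformation theory for $p$-divisible groups with $G$-structure developed in Kim's work.
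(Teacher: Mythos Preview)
Your approach is genuinely different from the paper's, and it has a real obstacle that you have not addressed. You propose to construct $\RZ_{\parabolic, b}$ directly by enlarging the family of crystalline tensors so as to cut out $\parabolic$ inside $G$, and then invoke Kim's representability and functoriality results. The difficulty is that $\parabolic$ is a \emph{parabolic} subgroup and hence not reductive; Kim's construction of Rapoport-Zink spaces of Hodge type (Theorem~4.9.1 in \cite{Kim13}) and the functoriality statements you want to use are formulated for connected reductive groups over $\Z_p$. The deformation theory with Tate tensors (Faltings, Moonen) that underlies the local structure of these spaces likewise assumes reductivity. So the foundation you say you will build on does not obviously exist, and this is precisely the point where the paper takes a different route.

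The paper avoids this issue entirely by choosing an \emph{EL realization}: an embedding of $(G,[b],\{\mu\})$ into a Hodge-Newton reducible EL-type datum $(\EL{G},[b],\{\mu\})$ with $\parabolic = \EL{\parabolic}\cap G$ and $\levi = \EL{\levi}\cap G$. For EL type, Mantovan has an explicit moduli description of a formal scheme $\RZ_{\EL{\parabolic},b}$ (filtered $p$-divisible groups with $\integerring$-action), together with maps $\EL{s},\EL{\pi}_1,\EL{\pi}_2$ for which the analogues of (i)--(iii) are already known. The paper then \emph{defines} $\RZ_{\parabolic,b}$ as the fiber product $\RZ_{\EL{\parabolic},b}\times_{\RZ_{\EL{G},b}}\RZ_{G,b}$, so that (iii) for $\pi_2$ is inherited from Mantovan's isomorphism $\EL{\pi}_2^\rig$ by base change, and (i) for $s$ follows by restricting $\EL{s}$ and checking that it lands in $\RZ_{\parabolic,b}$. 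The non-trivial content lies in (ii): one must show that the restriction $\pi_1$ of $\EL{\pi}_1$ actually factors through the closed subscheme $\RZ_{\levi,b}\hookrightarrow\RZ_{\EL{\levi},b}$, and this is checked on $\overline{\F}_p$-points and formal completions using the Hodge-Newton decomposition and filtration results of \cite{Hong16}. In short, what you regard as the hardest step, (iii), is in the paper a one-line citation via pullback, while the step requiring new input is the factorization in (ii). If you want to pursue a direct construction without the EL embedding, you would first have to extend Kim's theory to non-reductive structure groups, which is a substantial undertaking not hinted at in your proposal.
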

After establishing this lemma, we deduce the theorem by comparing the cohomology of the spaces $\RZ_{\levi, b}$ and $\RZ_{G, b}$ with the cohomology of $\RZ_{\parabolic, b}$.

This strategy originated in Mantovan's proof in \cite{Mantovan08}, and also appeared in the work of Shen in \cite{Shen13} and Hansen in \cite{Hansen16}. However, details of our argument will be different from those in the aforementioned works.

The main ingredient of our argument is the notion of \emph{EL realization} developed by the author in \cite{Hong16}. An EL realization of the datum $(G, [b], \{\mu\})$ is an embedding of $(G, [b], \{\mu\})$ into a local Shimura datum of EL type $(\EL{G}, [b], \{\mu\})$ which is Hodge-Newton reducible with respect to a parabolic subgroup $\EL{\parabolic}$ of $\EL{G}$ with Levi factor $\EL{\levi}$ such that $\parabolic = \EL{\parabolic} \cap G$ and $\levi = \EL{\levi} \cap G$. By functoriality of Hodge type Rapoport-Zink spaces, an EL realization of $(G, [b], \{\mu\})$ induces a closed embedding
\[ \RZ_{G, b} \hooklongrightarrow \RZ_{\EL{G}, b}. \]
Over the EL type Rapoport-Zink space $\RZ_{\EL{G}, b}$, Mantovan constructed an analogue of Rapoport-Zink space $\RZ_{\EL{\parabolic}, b}$ associated to $\EL{\parabolic}$. We will use the results from \cite{Hong16} to prove that the pull back of $\RZ_{\EL{\parabolic}, b}$ over $\RZ_{G, b}$ is the desired space $\RZ_{\parabolic, b}$ that yields the diagram in the lemma. 

We now give an overview of the structure of this paper. In section 2, we introduce general notations and recall some group theoretic preliminaries. In section 3, we review W. Kim's construction of Rapoport-Zink spaces of Hodge type. In section 4, we state and prove our main theorem.

\subsection*{Acknowledgments} I would like to express my deepest gratitude to Elena Mantovan. This study would have never been possible without her previous work for EL/PEL cases and her numerous helpful suggestions.

\section{Notations and preliminaries}

\subsection{General notations}\label{general notations}$ $

Throughout this paper, we use the following standard notations:
\begin{itemize}
\item $\overline{\F}_p$ is a fixed algebraic closure of $\F_p$;
\item $\overline{\Q}_p$ is a fixed algebraic closure of $\Q_p$;
\item $\Q_p^\unram$ is the maximal unramified extension of $\Q_p$ in $\overline{\Q}_p$;
\item $\C_p$ is the $p$-adic completion of $\overline{\Q}_p$;
\item $\completemaxunram$ is the $p$-adic completion of $\Q_p^\unram$;
\item $\completemaxunramint$ is the ring of integers of $\completemaxunram$.
\end{itemize}
In addition, we denote by $\sigma$ the Frobenius automorphism of $\overline{\F}_p$ and also its lift to $\completemaxunramint$ and $\completemaxunram$. 



Given a Noetherian ring $\genring$ and a free $\genring$-module $\module$, we denote by $\module^\otimes$ the direct sum of all the $\genring$-modules which can be formed from $\module$ using the operations of taking duals, tensor products, symmetric powers and exterior powers. An element of $\module^\otimes$ is called a \emph{tensor} over $\module$. Note that there is a natural identification $\module^\otimes \simeq (\module^*)^\otimes$ where $\module^*$ is the dual $\genring$-module of $\module$. Any isomorphism $\module \stackrel{\sim}{\rightarrow} \module'$ of free $R$-modules of finite rank naturally induces an isomorphism $\module^\otimes \stackrel{\sim}{\rightarrow} (\module')^\otimes$.

For a $p$-divisible group $\BT$ over a $\Z_p$-scheme $\genscheme$, we write $\D(\BT)$ for its (contravariant) Dieudonn\'e module and $\Fil^1 (\D(\BT)) \subset \D(\BT)_\genscheme$ for its Hodge filtration. We generally denote by $F$ the Frobenius map on $\D(\BT)$. 

\subsection{Group theoretic preliminaries}$ $

\subsubsection{}\label{mu-filtration}
Let $G$ be a connected reductive group over $\Z_p$. We write $\text{Rep}_{\Z_p}(G)$ for the category of finite rank $G$-representations of over $\Z_p$, and $\text{Rep}_{\Q_p}(G)$ for the category of finite dimensional $G_{\Q_p}$-representations over $\Q_p$.

Let $\genring$ be a $\Z_p$-algebra, and let $\gencocharacter: \Gm \to G_\genring$ be a cocharacter. We denote by $\{\gencocharacter\}_G$, or usually by $\{\gencocharacter\}$ if there is no risk of confusion, the $G(\genring)$-conjugacy class of $\gencocharacter$. 
When $\genring=\completemaxunramint$, we have a bijection
\[ \Hom_{\completemaxunramint}(\Gm, G_{\completemaxunramint})/G(\completemaxunramint) \cong \Hom_{\completemaxunram}(\Gm, G_{\completemaxunram})/G({\completemaxunram}) \stackrel{\sim}{\longrightarrow} G(\completemaxunramint) \backslash G({\completemaxunram})/ G(\completemaxunramint)\]
induced by $\{\gencocharacter\} \mapsto G(\completemaxunramint) \gencocharacter(p) G(\completemaxunramint)$; in fact, the first bijection comes from the fact that $G$ is split over $\completemaxunramint$, whereas the second bijection is the Cartan decomposition. 


Let $\module \in \text{Rep}_{\Z_p}(G)$ be a faithful $G$-representation over $\Z_p$. By \cite{Kisin10}, Proposition 1.3.2, we can choose a finite family of tensors $(s_i)_{i \in I}$ on $\module$ such that $G$ is the pointwise stabilizer of the $s_i$; i.e., for any $\Z_p$-algebra $\genring$ we have
\[ G(\genring) = \{ g \in \GL(\module \otimes_{\Z_p} {\genring}) : g(s_i \otimes 1) = s_i \otimes 1 \text{ for all } i \in I\}.\]
We say that a grading $\gr^{\bullet} (\module_\genring)$ is \emph{induced by $\gencocharacter$} if the following conditions are satisfied:
\begin{itemize}
\item[(i)] the $\Gm$-action on $\module_\genring$ via $\gencocharacter$ leaves each grading stable,
\item[(ii)] the resulting $\Gm$-action on $\gr^i(\module_\genring)$ is given by
\[ \Gm \xrightarrow{z \mapsto z^{-i}} \Gm \xrightarrow{ z \mapsto z \cdot \text{id}} \GL(\gr^i(\module_\genring)).\]
\end{itemize}

Let $\genscheme$ be an $\genring$-scheme, and $\vectorbundle$ a vector bundle on $\genscheme$. For a finite family of global sections $(t_i)$ of $\vectorbundle^\otimes$, we define the following scheme over $\genscheme$
\[ \isommatchingtensors_{\genscheme} := \Isom_{\sheafof{\genscheme}}\Big( [\vectorbundle, (t_i)], [ \module \otimes_\genring \sheafof{\genscheme} , (s_i \otimes 1)]\Big).\]
In other words, $\isommatchingtensors_{\genscheme}$ classifies isomorphisms of vector bundles $\vectorbundle \cong \module \otimes_\genring \sheafof{\genscheme}$ which match $(t_i)$ and $(s_i \otimes 1)$.

Let $\Fil^\bullet (\vectorbundle)$ be a filtration of $\vectorbundle$. When $\isommatchingtensors_{\genscheme}$ is a trivial $G$-torsor, we say that $\Fil^\bullet (\vectorbundle)$ is a \emph{$\{ \gencocharacter\}$-filtration} with respect to $(t_i)$ if there exists an isomorphism $\vectorbundle \cong \module \otimes_\genring \sheafof{\genscheme}$, matching $(t_i)$ and $(1 \otimes s_i)$, which takes $\Fil^\bullet (\vectorbundle)$ to a filtration of $\module \otimes_\genring \sheafof{\genscheme}$ induced by $g \gencocharacter g\inv$ for some $g \in G(R)$. More generally, when $\isommatchingtensors_{\genscheme}$ a $G$-torsor, we say that $\Fil^\bullet (\vectorbundle)$  is a $\{ \gencocharacter\}$-filtration with respect to $(t_i)$ if it is \'etale-locally a $\{\gencocharacter\}$-filtration.

\subsubsection{}\label{sigma-conjugacy classes}
We say that $b, b' \in G(\completemaxunram)$ are \emph{$\sigma$-conjugate} if $b' = g b \sigma(g)\inv$ for some $g \in G(\completemaxunram)$. We denote by $B(G)$ the set of all $\sigma$-conjugacy classes in $G(\completemaxunram)$. We write $[b]_G$, or simply $[b]$ when there is no risk of confusion, for the $\sigma$-conjugacy class of $b \in G(\completemaxunram)$.

Let us now fix a $\sigma$-conjugacy class $[b]$ in $B(G)$ and choose an element $b \in [b]$. We define a group valued functor $J_b$ on the category of $\Q_p$-algebras by setting for any $\Q_p$-algebra $\genring$
\[J_b(\genring):=\{ g \in G(\genring \otimes_{\Q_p} \completemaxunram) : gb\sigma(g)\inv = b\}.\]
This functor is represented by an algebraic group over $\Q_p$ which is an inner form of some Levi subgroup of $G_{\Q_p}$ (see \cite{Rapoport-Zink96}, Corollary 1.14.). The isomorphism class of $J_b$ does not depend on the choice $b \in [b]$ since any $g \in G(\completemaxunram)$ induces an isomorphism $J_b \cong J_{gb\sigma(g)\inv}$ via conjugation.

By an \emph{$F$-isocrystal} over $\overline{\F}_p$, we mean a vector space over $\completemaxunram$ equipped with a $\sigma$-linear automorphism $F$. Given a $G_{\Q_p}$-representation $\rho: G_{\Q_p} \longrightarrow \GL(V)$ over $\Q_p$, we set $N_b(\rho)$ to be the $\completemaxunram$-vector space $V \otimes_{\Q_p} \completemaxunram$ with $F=\rho(b) \circ (1 \otimes \sigma)$. Then $N_b$ defines a functor from $\text{Rep}_{\Q_p}(G)$ to the category of $F$-isocrystals over $\overline{\F}_p$. One easily checks that another choice $b' \in [b]$ gives an isomorphic functor $N_{b'}$.


\section{Rapoport-Zink spaces of Hodge type}\label{review of rapoport-zink spaces of hodge type}
In this section, we discuss the construction and key properties of Rapoport-Zink spaces of Hodge type, following \cite{Kim13}.

\subsection{Construction}
\subsubsection{} \label{definition of unram local Shimura data of Hodge type}

An \emph{unramified local Shimura datum of Hodge type} is a tuple $(G, [b], \{\mu\})$ where
\begin{itemize}
\item $G$ is a connected reductive group over $\Z_p$;
\item $[b]$ is a $\sigma$-conjugacy class of $G(\completemaxunram)$;
\item $\{\mu\}$ is a $G(\completemaxunramint)$-conjugacy class of cocharacters of $G$,
\end{itemize}
which satisfy the following conditions:
\begin{enumerate}[label=(\roman*)]
\item\label{unram loc shimura datum condition 1} $\{\mu\}$ is minuscule, 
\item\label{unram loc shimura datum condition 2} $[b] \cap G(\completemaxunramint) \mu(p) G(\completemaxunramint)$ is not empty for some (and hence for all) $\mu \in \{\mu\}$, 
\item\label{unram loc shimura datum condition 3} there exists a faithful $G$-representation $\module \in \text{Rep}_{\Z_p}(G)$ (with its dual $\module^*$) such that, for some $b \in [b] \cap G(\completemaxunramint) \mu(p) G(\completemaxunramint)$, we have a $\completemaxunramint$-lattice 
\[\dieudonnemodule \simeq \module^* \otimes_{\Z_p} \completemaxunramint \subset  N_b(\module^* \otimes_{\Z_p} \Q_p)\] 
with the property $p \dieudonnemodule \subset F \dieudonnemodule \subset \dieudonnemodule$. 
\end{enumerate}
Note that the set $G(\completemaxunramint) \mu(p) G(\completemaxunramint)$ is independent of the choice $\mu \in \{\mu\}$ as explained in \ref{mu-filtration}

Condition \ref{unram loc shimura datum condition 3} implies that for \emph{all} $b \in [b] \cap G(\completemaxunramint) \mu(p) G(\completemaxunramint)$, we can find a $\completemaxunramint$-lattice \[\dieudonnemodule \simeq \module^* \otimes_{\Z_p} \completemaxunramint \subset  N_b(\module^* \otimes_{\Z_p} \Q_p)\] 
with the property $p \dieudonnemodule \subset F \dieudonnemodule \subset \dieudonnemodule$. In fact, existence of $\dieudonnemodule$ is equivalent to the condition that the linearization of $F$ has an integer matrix representation with respect to some basis, which depends only on $[b]$.

Let us explain how the above definition is related to the definition of local Shimura data given by Rapoport and Viehmann in \cite{Rapoport-Viehmann14}, Definition 5.1. Since $G$ is split over $\completemaxunramint$, we may regard $\{\mu\}$ as a geometric conjugacy class of cocharacters of $G$. Then by the work of Kottwitz-Rapoport \cite{Kottwitz-Rapoport03}, Lucarelli \cite{Lucarelli04} and Gashi \cite{Gashi10}, we can state the condition \ref{unram loc shimura datum condition 2} as $[b] \in B(G_{\Q_p}, \{\mu\})$ where $B(G_{\Q_p}, \{\mu\})$ is the Kottwitz set defined in \cite{Kottwitz97}. Hence the tuple $(G_{\Q_p}, [b], \{\mu\})$ is a local Shimura datum as defined in \cite{Rapoport-Viehmann14}, Definition 5.1. 

\begin{lemma} \label{functoriality of unram local shimura data of hodge type} 
Let $(G, [b], \{\mu\})$ be an unramified local Shimura datum of Hodge type. 
\begin{enumerate} 
\item\label{product functoriality of unram local shimura data of hodge type} For any unramified local Shimura datum of Hodge type $(G', [b'], \{\mu'\})$, the tuple $(G \times G', [b, b'], \{\mu, \mu'\})$ is also an unramified local Shimura datum of Hodge type.
\item\label{hom functoriality of unram local shimura data of hodge type} For any homomorphism $f: G \longrightarrow G'$ of connected reductive group over $\Z_p$, the tuple $(G', [f(b)], \{f \circ \mu\})$ is an unramified local Shimura datum of Hodge type. 
\end{enumerate}
\end{lemma}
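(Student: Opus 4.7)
My plan is to verify the three axioms of an unramified local Shimura datum of Hodge type---minuscuity of the cocharacter class, non-emptiness of the double-coset intersection with $[b]$, and existence of a Frobenius-stable lattice in a faithful representation---for each claimed tuple, in each case transporting the data for $(G,[b],\{\mu\})$ (and $(G',[b'],\{\mu'\})$ in part (1)) along the evident group-theoretic construction.

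Part (1) reduces cleanly to componentwise statements. The root system of $G\times G'$ is the disjoint union of those of $G$ and $G'$, so $(\mu,\mu')$ pairs with every root to a value in $\{-1,0,1\}$, which gives axiom (i). Because $(G\times G')(\completemaxunramint)=G(\completemaxunramint)\times G'(\completemaxunramint)$ and $(\mu,\mu')(p)=(\mu(p),\mu'(p))$, the element $(b,b')$ lies in the double coset required by axiom (ii). For axiom (iii) the representation $\module\oplus\module'$ is faithful for $G\times G'$, the $F$-isocrystal $N_{(b,b')}((\module\oplus\module')^*)$ decomposes as $N_b(\module^*)\oplus N_{b'}((\module')^*)$, and the direct sum $\dieudonnemodule\oplus\dieudonnemodule'$ is the needed Dieudonn\'e lattice.

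For part (2), axiom (ii) follows by applying $f$ to the containment $b\in G(\completemaxunramint)\mu(p)G(\completemaxunramint)$, since $f$ is a morphism of $\Z_p$-group schemes. For axiom (iii), I will choose any faithful $G'$-representation $\module'$; selecting $b$ as supplied by axiom (ii) for $G$, the element $f(b)$ lies in $G'(\completemaxunramint)(f\circ\mu)(p)G'(\completemaxunramint)$. After absorbing the $G'(\completemaxunramint)$-factors (which preserve the standard lattice $(\module')^*\otimes\completemaxunramint$) the Frobenius on $N_{f(b)}((\module')^*)$ acts as $(f\circ\mu)(p)\otimes\sigma$, so the lattice condition of axiom (iii) reduces to the statement that the $T$-weights of $f\circ\mu$ on $(\module')^*$ lie in $\{0,1\}$ for a suitable maximal torus $T$---a bound that will be supplied by the minuscuity argument below.

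The main obstacle is axiom (i), the minuscuity of $\{f\circ\mu\}$ in $G'$: a general homomorphism of reductive groups need not carry roots to roots. My plan is to fix a maximal torus $T\subset G$ through which $\mu$ factors and a maximal torus $T'\subset G'$ containing $f(T)$, and then analyze the pullbacks $f^*\alpha$ of roots $\alpha$ of $(G',T')$ as characters of $T$. Each such $f^*\alpha$ appears as a $T$-weight of $\Lie(G')$ viewed as a $G$-representation via $f$, and combining minuscuity of $\mu$ in $G$ with the factorization $f=(\text{closed immersion})\circ(\text{central isogeny})\circ(\text{quotient by kernel})$---each step of which has well-controlled behaviour on root data (a central isogeny preserves root systems up to $\Q$-rescaling, and a closed-immersion step embeds roots of the subgroup as a subset of those of the ambient group)---should force $\langle\alpha,f\circ\mu\rangle\in\{-1,0,1\}$ for every root $\alpha$ of $G'$, simultaneously yielding the weight bound that closes out axiom (iii).
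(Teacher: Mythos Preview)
Your verification of part (1) is correct and is precisely the componentwise check that the paper's one-line proof abbreviates.

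For part (2), your instinct that minuscuity is the crux is right, but the proposed argument cannot be repaired: the assertion is false as stated. Take $G=\GL_2$ with its standard minuscule datum ($\mu$ has weights $0,1$ on the standard representation) and let $f\colon \GL_2 \to \GL_3\times\GL_2$ be the closed immersion $g\mapsto(\mathrm{Sym}^2 g,\,g)$; then $f\circ\mu$ pairs with the root $e_1-e_3$ of the $\GL_3$ factor to $\pm 2$, so axiom (i) fails for $G'$. Even where axiom (i) is vacuous, axiom (iii) can fail: the squaring map $\Gm\to\Gm$ carries a valid datum to one for which every faithful representation produces an isocrystal with a slope outside $[0,1]$, so no Dieudonn\'e lattice exists. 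Your factorization heuristic breaks at the closed-immersion step---roots of a reductive subgroup need not be roots of the ambient group (witness the long roots of $\mathrm{Sp}_{2n}\hookrightarrow\GL_{2n}$), and in any event what must be bounded is the pairing of roots of $G'$ with $f\circ\mu$, not of roots of $G$, and the former can be an arbitrary integer. The paper's proof is therefore also too glib in this generality; however, the only use of part (2) in the paper is for the embedding $G\hookrightarrow\GL(\module)$ supplied by axiom (iii) itself, and there the verification \emph{is} immediate: axiom (iii) for $\GL(\module)$ holds with the same $\module$ and lattice $\dieudonnemodule$, the condition $p\dieudonnemodule\subset F\dieudonnemodule\subset \dieudonnemodule$ forces the weights of $\mu$ on $\module$ into two consecutive integers (hence axiom (i) for $\GL(\module)$), and axiom (ii) transfers by applying $f$. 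The clean fix is to restrict part (2) to this case, or to add the hypothesis that $f\circ\mu$ is minuscule in $G'$.
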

\begin{proof}
This is an easy consequence of definition. 
\end{proof}

\subsubsection{} \label{description of F-crystals with tensors}

For the rest of this section, we fix our unramified local Shimura datum of Hodge type $(G, [b], \{\mu\})$ and also a faithful $G$-representation $\module \in \text{Rep}_{\Z_p}(G)$ in the condition \ref{unram loc shimura datum condition 3} of \ref{definition of unram local Shimura data of Hodge type}. By Lemma \ref{functoriality of unram local shimura data of hodge type}, we obtain a morphism of unramified local Shimura data of Hodge type
\[(G, [b], \{\mu\}) \longrightarrow (\GL(\module), [b]_{\GL(\module)}, \{\mu\}_{\GL(\module)}).\]

Let us now choose an element $b \in [b] \cap G(\completemaxunramint) \mu(p) G(\completemaxunramint)$ and take $\dieudonnemodule \simeq \module^* \otimes_{\Z_p} \completemaxunramint$ as in the condition \ref{unram loc shimura datum condition 3} of \ref{definition of unram local Shimura data of Hodge type}. We also choose a finite family of tensors $(s_i)_{i \in I}$ on $\module$ as in \ref{mu-filtration}. Then $\dieudonnemodule \simeq \module^* \otimes_{\Z_p} \completemaxunramint$ is equipped with tensors $(t_i) := (s_i \otimes 1)$, which are $F$-invariant since the linearization of $F$ on $\dieudonnemodule[1/p] = N_b(\module^* \otimes_{\Z_p} \Q_p)$ is given by $b \in G(\completemaxunram)$.

Take $\BT$ to be a $p$-divisible group over $\overline{\F}_p$ with $\D(\BT) = \dieudonnemodule$. Then the Hodge filtration $\Fil^1 (\D(\BT)) \subset \D(\BT)$ is a $\{\sigma\inv(\mu\inv)\}$-filtration with respect to $(t_i)$ (see \cite{Kim13}, Lemma 2.5.7 and Remark 2.5.8.). We may regard the tensors $(t_i)$ as additional structures on $\BT$ induced by the group $G$. We will often refer to these additional structures as \emph{$G$-structure} on $\BT$. We will write $\generalhodge{\BT} :=(\BT, (t_i))$ to indicate the $p$-divisible group $\BT$ with $G$-structure.

\subsubsection{} \label{rapoport-zink space for GLn}

Let $\Nilp_{\completemaxunramint}$ denote the category of $\completemaxunramint$-algebra where $p$ is nilpotent. For any $\genring \in \Nilp_{\completemaxunramint}$ we set $\RZ_b(\genring)$ to be the set of isomorphism classes of pairs $(\lift{\BT}, \genqisog)$ where
\begin{itemize}
\item $\lift{\BT}$ is a $p$-divisible group over $\genring$;
\item $\genqisog: \BT_{\genring/p} \longrightarrow \lift{\BT}_{\genring/p}$ is a quasi-isogeny, i.e., an invertible global section of \newline$\Hom(\BT_{\genring/p}, \lift{\BT}_{\genring/p}) \otimes_\Z \Q$. 
\end{itemize}
Then $\RZ_b$ defines a covariant set-valued functor on $\Nilp_{\completemaxunramint}$, which does not depend on the choice of $b \in [b] \cap G(\completemaxunramint) \mu(p) G(\completemaxunramint)$ up to isomorphism. 
Rapoport and Zink in \cite{Rapoport-Zink96} proved that the functor $\RZ_b$ is represented by a formal scheme which is locally formally of finite type and formally smooth over $\completemaxunramint$. We write $\RZ_b$ also for the representing formal scheme, and $\lift{\BT}_{\GL, b}$ for the universal $p$-divisible group over $\RZ_b$.

\subsubsection{}\label{rapoport-zink space of hodge type as a functor}

Given a pair $(\lift{\BT}, \genqisog) \in \RZ_b(\genring)$ with $\genring \in \Nilp_{\completemaxunramint}$, we have an isomorphism 
\[ \D(\genqisog) : \D(\lift{\BT}_{\genring/p}) [1/p] \stackrel{\sim}{\longrightarrow}  \D(\BT_{\genring/p})[1/p] \]
induced by $\genqisog$. We write $(t_{\lift{\BT}, i})$ for the inverse image of the tensors $(t_i)_\genring$ under this isomorphism.

Let $\Nilp_{\completemaxunramint}^\smooth$ denote the full subcategory of $\Nilp_{\completemaxunramint}$ consisting of formally smooth and formally finitely generated algebra over $\completemaxunramint/p^m$ for some positive integer $m$. For any $\genring \in \Nilp_{\completemaxunramint}^\smooth$, we define the set $\RZ_{G, b}^{(s_i)}(\genring) \subset \Hom_{\completemaxunramint}(\Spf (\genring), \RZ_b)$ as follows: for a morphism $f: \Spf(\genring) \to \RZ_b$ and a $p$-divisible group $\lift{\BT}$ over $\Spec(\genring)$ which pulls back to $f^* \lift{\BT}_{\GL, b}$ over $\Spf (\genring)$, we have $f \in \RZ_{G, b}^{(s_i)}(\genring)$ if and only if there exists a (unique) family of tensors $(\liftlow{t}_i)$ on $\D(\lift{\BT})$ with the following properties:
\begin{enumerate}[label=(\roman*)]
\item\label{Hodge type RZ functor isogeny condition} for some ideal of definition $J$ of $\genring$ containing $p$, the pull-back of $(\liftlow{t}_i)$ over $\genring/J$ agrees with the pull-back of $(t_{\lift{\BT}, i})$ over $\genring/J$,
\item\label{Hodge type RZ functor G-action condition} for a $p$-adic lift $\lift{\genring}$ of $\genring$ which is formally smooth over $\completemaxunramint$, the $\lift{\genring}$-scheme
\[ \isommatchingtensors_{\lift{\genring}} := \Isom_{\lift{\genring}}\Big( [\D(\lift{\BT})_{\lift{\genring}}, (\liftlow{t}_i)_{\lift{\genring}}], [ \module^* \otimes_{\Z_p} \lift{\genring} , (s_i \otimes 1)]\Big)\]
defined as in \ref{mu-filtration} is a $G$-torsor,
\item\label{Hodge type RZ functor hodge filtration condition} the Hodge filtration of $\lift{\BT}$ is a $\{\sigma\inv(\mu\inv)\}$-filtration with respect to $(\liftlow{t}_i)$. 
\end{enumerate}
Then $\RZ_{G, b}^{(s_i)}$ defines a set-valued functor on $\Nilp_{\completemaxunramint}^\smooth$.


\subsubsection{}\label{closed points of RZ functor}
Let us give a concrete description of the set $\RZ_{G, b}^{(s_i)}(\overline{\F}_p)$. Consider a pair $(\lift{\BT}, \genqisog) \in \RZ_b(\overline{\F}_p)$ with a family of tensors $(\liftlow{t}_i)$ on $\D(\lift{\BT})$. Then $(\liftlow{t}_i)$ has the property \ref{Hodge type RZ functor isogeny condition} of \ref{rapoport-zink space of hodge type as a functor} if and only if it is matched with the family $(t_i)$ under the isomorphism 
\[ \D(\genqisog) : \D(\lift{\BT}) [1/p] \stackrel{\sim}{\longrightarrow}  \D(\BT)[1/p] \]
induced by $\genqisog$. In addition, it satisfies the properties \ref{Hodge type RZ functor G-action condition} and \ref{Hodge type RZ functor hodge filtration condition} of \ref{rapoport-zink space of hodge type as a functor} if and only if $(\lift{\BT}, (\liftlow{t}_i))$ is a $p$-divisible group with $G$-structure that arises from the datum $(G, [b], \{\mu\})$. Hence the set $\RZ_{G, b}^{(s_i)}(\overline{\F}_p)$ classifies the isomorphism classes of tuples $(\lift{\BT}, (\liftlow{t}_i), \genqisog)$ where
\begin{itemize}
\item $(\lift{\BT}, (\liftlow{t}_i))$ is a $p$-divisible group over $\overline{\F}_p$ with $G$-structure;
\item $\genqisog: \BT \longrightarrow \lift{\BT}$ is a quasi-isogeny such that the induced isomorphism $\D(\lift{\BT})[1/p] \stackrel{\sim}{\longrightarrow} \D(\BT)[1/p]$ matches $(\liftlow{t}_i)$ with $(t_i)$.
\end{itemize}

\begin{prop}[\cite{Kim13}, Theorem 4.9.1.] \label{rapoport-zink spaces of hodge type construction} Assume that $p>2$. Then there exists a closed formal subscheme $\RZ_{G, b} \subset \RZ_b$, which is formally smooth over $\completemaxunramint$ and represents the functor $\RZ_{G, b}^{(s_i)}$ for any choice of the tensors $(s_i)$ in \ref{description of F-crystals with tensors}. Moreover, the isomorphism class of the formal scheme $\RZ_{G, b}$ depends only on the datum $(G, [b], \{\mu\})$. 
\end{prop}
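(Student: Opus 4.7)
The plan is to construct $\RZ_{G,b}$ as a closed formal subscheme of $\RZ_b$ cut out by the ``$G$-structure condition'' on the universal $p$-divisible group, and then to verify representability, formal smoothness, and independence of auxiliary choices separately. Throughout, the universal $p$-divisible group $\lift{\BT}_{\GL,b}$ over $\RZ_b$ carries a quasi-isogeny to $\BT$, and hence (after inverting $p$) a distinguished family of tensors $(t_{\lift{\BT},i})$ pulled back from $(t_i)$ on $\D(\BT)$. The Hodge-type locus should be the locus where these tensors extend integrally, are Frobenius-invariant as Dieudonn\'e-module tensors, cut out a $G$-torsor on the Isom-scheme, and have the property that the Hodge filtration is a $\{\sigma\inv(\mu\inv)\}$-filtration with respect to them.

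First I would work pointwise: using the explicit description in \ref{closed points of RZ functor}, the set $\RZ_{G,b}^{(s_i)}(\overline{\F}_p) \subset \RZ_b(\overline{\F}_p)$ is identified with isomorphism classes of tuples $(\lift\BT,(\liftlow t_i),\genqisog)$ arising from $(G,[b],\{\mu\})$. Then, at each such point $z$, I would analyze the complete local ring $\widehat{\sheafof{\RZ_b,z}}$ by means of Faltings' / Grothendieck-Messing universal deformation: lifts of $\lift{\BT}_z$ are parametrized by lifts of the Hodge filtration $\Fil^1\D(\lift{\BT}_z) \subset \D(\lift{\BT}_z)$ to the appropriate Grassmannian over $\widehat{\sheafof{\RZ_b,z}}$. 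The tensors $(\liftlow t_i)$ extend canonically to Frobenius-invariant tensors on the universal Dieudonn\'e crystal over the deformation ring (since $F$ is determined by $b$ and $b$-conjugation preserves the $s_i$), so the only nontrivial condition on a lift is that the lifted Hodge filtration be a $\{\sigma\inv(\mu\inv)\}$-filtration with respect to these tensors. As explained in \ref{mu-filtration}, such $\{\mu\inv\}$-filtrations form the points of a smooth closed subscheme of the ambient Grassmannian (a partial flag variety for $G_{\completemaxunramint}$ inside that for $\GL(\module)_{\completemaxunramint}$). This immediately yields a closed formal subscheme of $\Spf\widehat{\sheafof{\RZ_b,z}}$ that is formally smooth over $\completemaxunramint$ of the expected relative dimension.

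Next I would glue these local pieces. Because the $G$-torsor condition \ref{Hodge type RZ functor G-action condition} and the $\{\mu\}$-filtration condition \ref{Hodge type RZ functor hodge filtration condition} in \ref{rapoport-zink space of hodge type as a functor} are defined in terms of the universal Dieudonn\'e crystal and its Hodge filtration, they descend to a locally closed condition on $\RZ_b$, and the local analysis above shows this locus is in fact closed and formally smooth at each of its points. This defines the formal subscheme $\RZ_{G,b} \subset \RZ_b$. The representability of the functor $\RZ_{G,b}^{(s_i)}$ by $\RZ_{G,b}$ then reduces to checking the universal property on $\Spf \genring$ for $\genring \in \Nilp_{\completemaxunramint}^\smooth$: a morphism $\Spf\genring \to \RZ_b$ factors through $\RZ_{G,b}$ if and only if the pulled-back tensors $(t_{\lift{\BT},i})$ extend to a Frobenius-invariant family $(\liftlow t_i)$ on $\D(\lift\BT)$ satisfying the $G$-torsor and $\{\mu\}$-filtration conditions, which is exactly the definition of $\RZ_{G,b}^{(s_i)}(\genring)$.

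The hard part will be the formal smoothness: it requires showing that, given a lift of the Hodge filtration satisfying the $\{\mu\}$-condition modulo some ideal, one can canonically lift it further, with no obstruction from the extra tensors. Concretely, one must check that the natural map from the $G$-partial flag variety parametrizing $\{\sigma\inv(\mu\inv)\}$-filtrations on $\D(\lift\BT)$ to the corresponding $\GL(\module)$-flag variety is a closed immersion of smooth formal schemes in a neighborhood of the given filtration, which reduces to the fact that $G$ is the pointwise stabilizer of $(s_i)$ and that $\{\mu\}$ is minuscule (so both sides are honest smooth projective Grassmannians). This is where the hypothesis $p>2$ enters, via the Grothendieck-Messing / Zink display machinery used to identify deformations with filtrations. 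Finally, independence of the choices of $\module$ and $(s_i)$ follows because any two such choices give the same group $G$ and hence the same deformation condition: the $G$-torsor condition \ref{Hodge type RZ functor G-action condition} is intrinsic to $G$ and not to the chosen tensors, so two presentations produce the same closed formal subscheme of $\RZ_b$ (compatibly under the morphisms of Lemma \ref{functoriality of unram local shimura data of hodge type}), and the isomorphism class of $\RZ_{G,b}$ depends only on $(G,[b],\{\mu\})$.
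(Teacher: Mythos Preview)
The paper does not give its own proof of this proposition: it is stated with the attribution ``\cite{Kim13}, Theorem 4.9.1'' and no argument is supplied. So there is nothing in the paper to compare your proposal against directly; the result is simply imported from Kim.

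That said, your sketch is broadly in the spirit of Kim's actual construction, but it elides the step that carries most of the weight. You write that ``the tensors $(\liftlow t_i)$ extend canonically to Frobenius-invariant tensors on the universal Dieudonn\'e crystal over the deformation ring (since $F$ is determined by $b$ and $b$-conjugation preserves the $s_i$)''. This is not correct as stated: over the deformation ring the Frobenius on the crystal is \emph{not} simply given by $b$, and producing an integral, Frobenius-invariant extension of the rational tensors $(t_{\lift{\BT},i})$ is precisely the hard part. In Kim's argument this is handled via Zink's display/window theory (hence the hypothesis $p>2$), together with a delicate analysis showing that the tensors, a priori only defined after inverting $p$, in fact lie in the integral Dieudonn\'e module and are horizontal for the crystalline connection. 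Your proposal treats this as automatic and then locates the difficulty entirely in the $\{\mu\}$-filtration condition on the Grassmannian; in reality the integrality of the tensors and the filtration condition are intertwined, and the flag-variety picture only becomes available once the $G$-torsor $\isommatchingtensors$ has been shown to exist integrally.

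Your gluing step is also underspecified: saying the conditions ``descend to a locally closed condition on $\RZ_b$'' and then appealing to the local analysis to upgrade this to closed is not a proof. Kim's route is rather to \emph{define} $\RZ_{G,b}$ as the union over the specified $\overline{\F}_p$-points of the formal spectra of the explicit deformation rings $\Def_{\BT_x,G}$ (constructed by Faltings), and then to check that this is a closed formal subscheme of $\RZ_b$ with the asserted moduli interpretation. The independence-of-choices argument likewise requires more than ``the $G$-torsor condition is intrinsic to $G$'': one must compare the two resulting closed subschemes inside possibly different ambient $\RZ_b$'s, which Kim does via the functorialities you cite.
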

We let $\lift{\BT}_{G, b}$ denote the ``universal $p$-divisible group"  over $\RZ_{G, b}$, obtained by taking the pull-back of $\lift{\BT}_{\GL, b}$. Then we obtain a family of ``universal tensors" $(\liftlow{t}^\univ_i)$ on $\D(\lift{\BT}_{G, b})$ by applying the universal property to an open affine covering of $\RZ_{G, b}$.

\begin{example}\label{rapoport-zink spaces of EL type}
Consider the case $G = \ELgroupintegral$ where $\integerring$ is the ring of integers of some finite unramified extension of $\Q_p$. In this case, choosing a family of tensors $(s_i)$ on $\module$ as in \ref{description of F-crystals with tensors} is equivalent to choosing a $\Z_p$-basis of $\integerring$. Then the family of tensors $(t_i)$ encodes an action of $\integerring$ on $\dieudonnemodule$ and thus on $\BT$. Hence $\generalhodge{\BT} =(\BT, (t_i))$ can be identified with a $p$-divisible group $\BT$ with an action of $\integerring$. 

In this setting, the construction of $\RZ_{G, b}$ agrees with the construction of Rapoport-Zink spaces of EL type in \cite{Rapoport-Zink96} (see \cite{Kim13}, Proposition 4.7.1.). In other words, for any $\genring \in \Nilp_{\completemaxunramint}$ the set $\RZ_{G, b}(\genring)$ classifies the isomorphism classes of pairs $(\lift{\BT}, \genqisog)$ where
\begin{itemize}
\item $\lift{\BT}$ is a $p$-divisible group over $\genring$, endowed with an action of $\integerring$ such that
\[ \det {}_\genring(a, \Lie(\lift{\BT})) = \det(a, \Fil^0(\D(\BT))_{\completemaxunram}) \quad \text{ for all } a \in \integerring,\]
\item $\genqisog: \BT_{\genring/p} \to \lift{\BT}_{\genring/p}$ is a quasi-isogeny which commutes with the action of $\integerring$.
\end{itemize}

\end{example}

\subsection{Functorial properties}$ $

For the rest of this section, we assume that $p>2$ and take $\RZ_{G, b}$ as in Proposition \ref{rapoport-zink spaces of hodge type construction}.


\begin{prop}[\cite{Kim13}, Theorem 4.9.1.] \label{functoriality of rapoport-zink spaces}
Let $(G', [b'], \{\mu'\})$ be another unramified local Shimura datum of Hodge type, and choose $b' \in [b'] \cap G(\completemaxunramint) \mu'(p) G(\completemaxunramint)$ that gives rise to a $p$-divisible group over $\overline{\F}_p$ with $G'$-structure as in \ref{description of F-crystals with tensors}. 

\begin{enumerate}[label=(\arabic*)]
\item\label{product functoriality of RZ spaces of hodge type} The natural morphism $\RZ_b \times_{\Spf (\completemaxunramint)} \RZ_{b'} \longrightarrow \RZ_{(b, b')}$, defined by the product of $p$-divisible groups with quasi-isogeny, induces an isomorphism
\begin{equation*} \label{product natural morphism on RZ spaces}
\RZ_{G, b} \times_{\Spf (\completemaxunramint)} \RZ_{G', b'} \stackrel{\sim}{\longrightarrow} \RZ_{G \times G', (b, b')}
\end{equation*}

\item\label{hom functoriality of RZ spaces of hodge type} For any homomorphism $f : G \longrightarrow G'$ with $f(b) = b'$, there exists an induced morphism 
\begin{equation*} \label{hom natural morphism on RZ spaces}
\RZ_{G, b} \longrightarrow \RZ_{G', b'},
\end{equation*}
which is a closed embedding if $f$ is a closed embedding. 
\end{enumerate}
\end{prop}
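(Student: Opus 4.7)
The plan is to work at the level of the functors $\RZ_{G, b}^{(s_i)}$ introduced in \ref{rapoport-zink space of hodge type as a functor} and invoke the representability afforded by Proposition \ref{rapoport-zink spaces of hodge type construction}. Lemma \ref{functoriality of unram local shimura data of hodge type} already ensures that the target tuples $(G \times G', [(b, b')], \{(\mu, \mu')\})$ and $(G', [f(b)], \{f \circ \mu\})$ are unramified local Shimura data of Hodge type, so the relevant Rapoport-Zink spaces on the right-hand side are defined.

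For part \ref{product functoriality of RZ spaces of hodge type}, I first fix faithful representations $\module \in \text{Rep}_{\Z_p}(G)$ and $\module' \in \text{Rep}_{\Z_p}(G')$, together with tensor families $(s_i)$ and $(s'_j)$ defining $G$ and $G'$ as pointwise stabilizers as in \ref{mu-filtration}. Then $\module \oplus \module'$ is a faithful $G \times G'$-representation, and I augment the combined tensor family $(s_i) \cup (s'_j)$ by the two idempotent projectors $\pi, \pi' \in \End(\module \oplus \module') \hookrightarrow (\module \oplus \module')^\otimes$ onto the summands; the pointwise stabilizer of this augmented family is precisely $G \times G'$. The natural morphism on GL-level spaces
\[ \RZ_b \times_{\Spf(\completemaxunramint)} \RZ_{b'} \longrightarrow \RZ_{(b, b')} \]
sends $((\lift{\BT}, \genqisog), (\lift{\BT}', \genqisog'))$ to $(\lift{\BT} \times \lift{\BT}', \genqisog \times \genqisog')$, and is a closed immersion at the formal-scheme level; functorially, its image consists of lifts of $\BT \times \BT'$ on which the two projector tensors on the Dieudonn\'e module admit compatible orthogonal idempotent lifts. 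To promote this to the Hodge-type statement, I check on $\Nilp_{\completemaxunramint}^\smooth$-valued points that any lift of $\BT \times \BT'$ carrying lifts of all augmented tensors must split as a product of $p$-divisible groups: the lifted projectors remain orthogonal idempotents on the Dieudonn\'e module, and Grothendieck--Messing theory then forces the corresponding splitting of the $p$-divisible group itself. Under this splitting, the three conditions of \ref{rapoport-zink space of hodge type as a functor} for $(G \times G', (b, b'))$ with the augmented family decompose cleanly into the same conditions for $(G, b)$ and $(G', b')$ on each factor, yielding the claimed isomorphism.

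For part \ref{hom functoriality of RZ spaces of hodge type}, I treat the two cases in turn. When $f$ is a closed embedding, a faithful $G'$-representation $\module'$ with tensors $(s'_j)$ defining $G'$ is automatically a faithful $G$-representation; enlarging $(s'_j)$ to a family $(s_i) \supset (s'_j)$ on $\module'$ (via Kisin's Proposition 1.3.2 applied to the $G$-action) so that $(s_i)$ cuts out $G$ in $\GL(\module')$, I realize both $\RZ_{G, b}$ and $\RZ_{G', f(b)}$ inside the same ambient GL-space $\RZ_b$: the former is cut out by the conditions of \ref{rapoport-zink space of hodge type as a functor} for the larger family $(s_i)$, the latter by those for the smaller family $(s'_j)$. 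Since the former conditions imply the latter, $\RZ_{G, b} \hookrightarrow \RZ_{G', f(b)}$ is a closed embedding. For general $f$, I pick faithful $\module$ for $G$ and $\module'$ for $G'$ and work with $\module \oplus \module'$ viewed as a faithful $G$-representation via $(\mathrm{id}, f)$. Using part \ref{product functoriality of RZ spaces of hodge type} to decompose the ambient GL-space as $\RZ_b \times \RZ_{f(b)}$, I define the induced morphism as the closed immersion $\RZ_{G, b} \hookrightarrow \RZ_b \times \RZ_{f(b)}$ (constructed using $\module \oplus \module'$) followed by projection onto the second factor; the target lands in $\RZ_{G', f(b)}$ because the restriction of the universal tensors on $\D(\lift{\BT}_{G, b})$ to the $\module'$-component, through the $G$-action factoring via $f$, endows the second-factor $p$-divisible group with a valid $G'$-structure.

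The key technical step, and my anticipated main obstacle, is the splitting claim in part \ref{product functoriality of RZ spaces of hodge type}: that lifted projector tensors force the lift of $\BT \times \BT'$ to decompose as a product of $p$-divisible groups. This hinges on the uniqueness of tensor lifts encoded in condition \ref{Hodge type RZ functor isogeny condition} of \ref{rapoport-zink space of hodge type as a functor} combined with Grothendieck--Messing deformation theory, which is precisely the machinery underlying Kim's Theorem 4.9.1. Once it is in hand, the representability and closed-embedding conclusions transfer directly from Proposition \ref{rapoport-zink spaces of hodge type construction}.
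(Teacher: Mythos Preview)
The paper does not supply its own proof of this proposition: it is stated with the attribution ``\cite{Kim13}, Theorem 4.9.1.'' and no proof environment follows. In other words, the author imports the result wholesale from Kim and treats it as a black box, so there is no argument in the paper against which to compare your proposal.

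That said, your sketch is a reasonable outline of how the result is established in \cite{Kim13}. The choice of $\module \oplus \module'$ with the projector tensors for part \ref{product functoriality of RZ spaces of hodge type}, and the enlargement-of-tensors / graph-factorization strategy for part \ref{hom functoriality of RZ spaces of hodge type}, are indeed the natural moves and match Kim's approach. Your identification of the splitting step (that the lifted idempotent tensors on the Dieudonn\'e crystal force a product decomposition of the deformed $p$-divisible group) as the crux is accurate; in Kim's paper this is handled by the crystalline--Grothendieck--Messing machinery underlying his Theorem 4.9.1, exactly as you anticipate. If you were to flesh this out further you would also want to verify that the $\{(\mu,\mu')\}$-filtration condition \ref{Hodge type RZ functor hodge filtration condition} decomposes under the splitting, but this is straightforward once the torsor condition \ref{Hodge type RZ functor G-action condition} is in hand.
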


\subsubsection{} \label{closed points of RZ spaces and affine Deligne Lusztig set}

We want to describe the functorial properties in Proposition \ref{functoriality of rapoport-zink spaces} on the set of $\overline{\F}_p$-valued points. For this, we introduce the set
\[ \DeligneLusztig{G}{ \mu}{b} := \{ g \in G(\completemaxunram)/ G(\completemaxunramint) | g b \sigma(g)\inv \in G(\completemaxunramint) \mu(p) G(\completemaxunramint)\} \]
which is clearly independent of our choice of $b \in [b]$ up to bijection. The set $\DeligneLusztig{G}{ \mu}{b}$ is called the \emph{affine Deligne-Lusztig set} associated to the datum $(G, [b], \{\mu\})$. As explained in \cite{Kim13}, 4.8, we have a natural bijection
\begin{equation*}\label{k-point of rapoport-zink spaces} \DeligneLusztig{G}{ \mu}{ b} \stackrel{\sim}{\longrightarrow} \RZ_{G, b}(\overline{\F}_p).\end{equation*}

Let us now consider another unramified local Shimura datum $(G', [b'], \{\mu'\})$ and choose $b' \in [b'] \cap G(\completemaxunramint) \mu'(p) G(\completemaxunramint)$ as in Proposition \ref{functoriality of rapoport-zink spaces}. Then on the set of $\overline{\F}_p$-valued points, the morphism in \ref{product functoriality of RZ spaces of hodge type} of Proposition \ref{functoriality of rapoport-zink spaces} gives a map
\[\DeligneLusztig{G}{ \mu}{ b} \times \DeligneLusztig{G'}{ \mu'}{ b'} \stackrel{\sim}{\longrightarrow}\DeligneLusztig{G \times G'}{ \mu , \mu'}{ b, b'} \]
which maps $\big(g G(\completemaxunramint), g' G'(\completemaxunramint)\big)$ to $(g, g') (G \times G')(\completemaxunramint)$. 
For any homomorphism $f : G \longrightarrow G'$ with $f(b) = b'$, the morphism in \ref{hom functoriality of RZ spaces of hodge type} of Proposition \ref{functoriality of rapoport-zink spaces} yields a map
\[\DeligneLusztig{G}{ \mu}{ b} \longrightarrow \DeligneLusztig{G'}{ f \circ \mu}{ f(b)}\]
which maps $gG(\completemaxunramint)$ to $f(g)G'(\completemaxunramint)$.

\subsubsection{}

We now describe the functorial properties in Proposition \ref{functoriality of rapoport-zink spaces} on the formal completions at an  $\overline{\F}_p$-valued point. Let $x$ be a point in $\RZ_{G, b}(\overline{\F}_p)$, and write $(\BT_x, (t_{x, i}), \genqisog_x)$ for the corresponding tuple under the description of $\RZ_{G, b}(\overline{\F}_p)$ in \ref{closed points of RZ functor}. We denote by $\completion{(\RZ_{G, b})_x}$ the formal completion of $\RZ_{G, b}$ at $x$.

For an artinian local $\completemaxunramint$-algebra $\genring$ with residue field $\overline{\F}_p$, we define a \emph{deformation} of $\BT_x$ over $\genring$ to be a $p$-divisible group $\deform{\BT}_x$ over $\genring$ with an isomorphism $\deform{\BT}_x \otimes_\genring \overline{\F}_p \cong \BT_x$. By Faltings in \cite{Faltings99}, \S7, there exists a formal scheme $\Def_{\BT_x, G}$ over $\Spf (\completemaxunramint)$ which classifies the deformations of $\BT_x$ with Tate tensors in the following sense: for a formally smooth $\completemaxunramint$-algebra of the form $\genring = \completemaxunramint[[u_1, \cdots, u_N]]$ or $\genring = \completemaxunramint[[u_1, \cdots, u_N]]/(p^m)$, $\Def_{\BT_x, G}(\genring)$ is the set of isomorphism classes of the pair $(\deform{\BT}_x, (\liftlow{t}_i))$ where
\begin{itemize}
\item $\deform{\BT}_x$ is a deformation of $\BT_x$ over $\genring$;
\item $(\liftlow{t}_i)$ is a family of Frobenius-invariant tensors on $\D(\deform{\BT}_x)$ which lift the tensors $(t_i)$ and lie in the $0$th filtration with respect to the Hodge filtration. 
\end{itemize}
From this moduli description, we obtain a natural isomorphism
\[ \Def_{\BT_x, G} \simeq \completion{(\RZ_{G, b})_x}\]
as explained in \cite{Kim13}, 4.8.

Now consider another unramified local Shimura datum $(G', [b'], \{\mu'\})$ and choose $b' \in [b'] \cap G(\completemaxunramint) \mu'(p) G(\completemaxunramint)$ as in Proposition \ref{functoriality of rapoport-zink spaces}. For any point $x' \in \RZ_{G', b'}(\overline{\F}_p)$, the morphism in \ref{product functoriality of RZ spaces of hodge type} of Proposition \ref{functoriality of rapoport-zink spaces} induces an isomorphism
\[\Def_{\BT_x, G} \times \Def_{\BT_{x'}, G'} \stackrel{\sim}{\longrightarrow} \Def_{\BT_x \times \BT_{x'}, G \times G'}  \]
defined by the product of deformations. For any homomorphism $f : G \longrightarrow G'$ with $f(b) = b'$, if we take $x' \in \RZ_{G', b'}(\overline{\F}_p)$ to be the image of $x$ under the morphism in \ref{hom functoriality of RZ spaces of hodge type} of Proposition \ref{functoriality of rapoport-zink spaces}, we have an induced morphism
\[ \Def_{\BT_x, G} \longrightarrow \Def_{\BT_{x'}, G'}\]
which is a closed embedding if $f$ is a closed embedding.

\subsection{Associated local Shimura varieties}

\subsubsection{} \label{action of Jb on rapoport-zink spaces}
Consider the algebraic group $J_b$ over $\Q_p$ defined in \ref{sigma-conjugacy classes}. Note that $J_b(\Q_p)$ can be identified with the group of quasi-isogenies $\gamma: \BT \longrightarrow \BT$ that preserve the tensors $(t_i)$. One can show that $\RZ_{G, b}$ carries a natural left $J_b(\Q_p)$-action defined by
\[ \gamma (\lift{\BT}, \genqisog) = (\lift{\BT}, \genqisog \circ \gamma\inv)\]
for any $\genring \in \Nilp_{\completemaxunramint}, (\lift{\BT}, \genqisog) \in \RZ_{G, b}(\genring)$ and $\gamma \in J_b(\Q_p)$ (see \cite{Kim13}, 7.2.).

\subsubsection{} \label{weil descent datum on rapoport-zink spaces}

Let $\localreflexfield$ be the field of definition of the $G(\completemaxunram)$-conjugacy class of $\mu$, and let $\integerring_{\localreflexfield}$ denote its ring of integers. Note that $\localreflexfield$ is a finite unramified extension of $\Q_p$ since $G_{\Q_p}$ is split over a finite unramified extension of $\Q_p$. Let $d$ be the degree of the extension, and write $\tau$ for the Frobenius automorphism of $\completemaxunram$ relative to $\localreflexfield$.

For any formal scheme $\genscheme$ over $\Spf (\completemaxunramint)$, we write $\genscheme^\tau : = \genscheme \times_{\Spf (\completemaxunramint), \tau} \Spf (\completemaxunramint)$. 
By a \emph{Weil descent datum} on $\genscheme$ over $\integerring_\localreflexfield$, we mean an isomorphism $\genscheme \stackrel{\sim}{\longrightarrow} \genscheme^\tau$. If $\genscheme \cong \genscheme_0 \times_{\Spf(\integerring_\localreflexfield)} \Spf(\completemaxunramint)$ for some  formal scheme $\genscheme_0$ over $\Spf (\integerring_\localreflexfield)$, then there exists a natural Weil descent datum on $\genscheme$ over $\integerring_\localreflexfield$, called an \emph{effective} Weil descent datum.

For any $\genring \in \Nilp_{\completemaxunramint}$, we define $\genring^\tau$ to be $\genring$ viewed as a $\completemaxunramint$-algebra via $\tau$. Note that we have a natural identification $\RZ_b^\tau(\genring) = \RZ_b(\genring^\tau)$. Following Rapoport and Zink in \cite{Rapoport-Zink96}, 3.48, we define a Weil descent datum $\weildescent$ on $\RZ_b$ over $\integerring_\localreflexfield$ by sending $(\lift{\BT}, \genqisog) \in \RZ_b(\genring)$ with $\genring \in \Nilp_{\completemaxunramint}$ to $(\lift{\BT}^\weildescent, \genqisog^\weildescent) \in \RZ_b(\genring^\tau)$ where
\begin{itemize}
\item $\lift{\BT}^\weildescent$ is $\lift{\BT}$ viewed as a $p$-divisible group over $\genring^\tau$;
\item $\genqisog^\weildescent$ is the quasi-isogeny
\[ \genqisog^\weildescent : \BT_{\genring^\tau /p} = (\tau^* \BT)_{\genring/p} \xrightarrow{\text{Frob}^{-d}} \BT_{R/p} \stackrel{\genqisog}{\longrightarrow} \lift{\BT}_{\genring/p} = \lift{\BT}^\weildescent_{\genring/p}\]
where $\text{Frob}^d: \BT \to \tau^* \BT$ is the relative $q$-Frobenius with $q=p^d$. 
\end{itemize}
One can check that $\weildescent$ restricts to a Weil descent datum $\weildescent_G$ on $\RZ_{G, b}$ over $\integerring_\localreflexfield$ by looking at $\overline{\F}_p$-points and the formal completions thereof. The Weil descent datum $\weildescent_G$ clearly commutes with the $J_b(\Q_p)$-action defined in \ref{action of Jb on rapoport-zink spaces}. 


\subsubsection{} \label{construction of rigid analytic tower}

Since $\RZ_{G, b}$ is locally formally of finite type over $\Spf (\completemaxunramint)$, it admits a rigid analytic generic fiber which we denote by $\RZ_{G, b}^\rig$ (see \cite{Berthelot96}.). The $J_b(\Q_p)$-action and the Weil descent datum $\weildescent_G$ on $\RZ_{G, b}$ induce an action of $J_b(\Q_p)$ on $\RZ_{G, b}^\rig$ and an Weil descent datum $\varPhi_G : \RZ_{G, b}^\rig \stackrel{\sim}{\longrightarrow} (\RZ_{G, b}^\rig)^\tau$ over $\localreflexfield$. 


Recall that we have a universal $p$-divisible group $\lift{\BT}_{G, b}$ over $\RZ_{G, b}$ and a family of universal tensors $(\liftlow{t}^\univ_i)$ on $\D(\lift{\BT}_{G, b})$. In addition, the family $(\liftlow{t}^\univ_i)$ has a ``\'etale realization'' $(\liftlow{t}^\univ_{i, \et})$ on the Tate module $\tatemodule{\lift{\BT}_{G, b}}$ (see \cite{Kim13}, Theorem 7.1.6.). 

For any open compact subgroup $\levelatp$ of $G(\Z_p)$, we define the following rigid analytic \'etale cover of $\RZ_{G, b}^\rig$:
\[\RZ_{G, b}^\levelatp := \Isom_{\RZ_{G, b}^\rig}\Big( [\module, (s_i)], [ \tatemodule{\lift{\BT}_{G, b}} , (\liftlow{t}^\univ_{i, \et}) ] \Big)\Big/\levelatp.\]
The $J_b(\Q_p)$-action and the Weil descent datum over $\localreflexfield$ on $\RZ_{G, b}^\rig$ pull back to $\RZ_{G, b}^\levelatp$. 
As the level $\levelatp$ varies, these covers form a tower $\{\RZ_{G, b}^\levelatp\}$ with Galois group $G(\Z_p)$. We denote this tower by $\analtower{\RZ_{G, b}}$.

By \cite{Kim13}, Proposition 7.4.8, there exists a right $G(\Q_p)$-action on the tower $\analtower{\RZ_{G, b}}$ extending the Galois action of $G(\Z_p)$, which commutes with the natural $J_b(\Q_p)$-action and the Weil descent datum over $\localreflexfield$. In addition, there is a well-defined period map on $\RZ_{G, b}^\rig$ as explained in \cite{Kim13}, 7.5. Hence the tower $\analtower{\RZ_{G, b}}$ is a local Shimura variety in the sense of Rapoport and Viehmann in \cite{Rapoport-Viehmann14}, 5.1.

\subsubsection{} \label{cohomology of rigid analytic tower}

We fix a prime $l \neq p$, and let $\weilgroup{\localreflexfield}$ denote the Weil group of $\localreflexfield$. For any level $\levelatp \subset G(\Z_p)$, we consider the cohomology groups
\[ H^i(\RZ_{G, b}^\levelatp) = H^i_c (\RZ_{G, b}^\levelatp \otimes_{\completemaxunram} \C_p, \Q_l( \dim \RZ_{G, b}^\levelatp)).\]
As the level $\levelatp$ varies, these cohomology groups form a tower $\{ H^i(\RZ_{G, b}^\levelatp)\}$ for each $i$, endowed with a natural action of $G(\Q_p) \times \weilgroup{\localreflexfield} \times J_b(\Q_p)$.

Let $\rho$ be an admissible $l$-adic representation of $J_b(\Q_p)$. The groups 
\[H^{i, j}(\analtower{\RZ_{G, b}})_\rho := \varinjlim_{\levelatp} \Ext^j_{J_b(\Q_p)} (H^i(\RZ_{G, b}^\levelatp), \rho)\]
satisfy the following properties (see \cite{Rapoport-Viehmann14}, Proposition 6.1 and \cite{Mantovan08}, Theorem 8):
\begin{enumerate}
\item The groups $H^{i, j}(\analtower{\RZ_{G, b}})_\rho$ vanish for almost all $i, j$. 
\item There is a natural action of $G(\Q_p) \times \weilgroup{\localreflexfield}$ on each $H^{i, j}(\analtower{\RZ_{G, b}})_\rho$. 
\item The representations $H^{i, j}(\analtower{\RZ_{G, b}})_\rho$ are admissible. 
\end{enumerate}
Hence we can define a virtual representation of $G(\Q_p) \times \weilgroup{\localreflexfield}$
\[ H^\bullet(\RZ^\infty_{G, b})_\rho := \sum_{i, j \geq 0} (-1)^{i+j} H^{i, j} (\analtower{\RZ_{G, b}})_\rho.\]

\section{Hodge-Newton reducibility and Harris-Viehmann conjecture}\label{Hodge-Newton reducibility and Harris-Viehmann conjecture}


\subsection{Harris-Viehmann conjecture: statement}

\subsubsection{}\label{notations for HV conjecture}
Throughout this section, we fix a prime $p>2$ and an unramified local Shimura datum of Hodge type $(G, [b], \{\mu\})$. We also choose a faithful $G$-representation $\module \in \text{Rep}_{\Z_p}(G)$ and a finite family of tensors $(s_i)$ on $\module$ as in \ref{description of F-crystals with tensors}. In addition, we fix a maximal torus $T \subseteq G$ and a Borel subgroup $B \subseteq G$ containing $T$, both defined over $\Z_p$.

Let $\parabolic$ be a proper standard parabolic subgroup of $G$ with Levi factor $\levi$ and unipotent radical $\unipotent$. For any element $b \in [b] \cap \levi(\completemaxunram)$, we define $I_{b, \{\mu\}, \levi}$ to be the set of $\levi(\completemaxunramint)$-conjugacy classes of cocharacters of $\levi$ with a representative $\mu'$ such that
\begin{enumerate}[label=(\roman*)]
\item $\mu' \in \{\mu\}_G$,
\item $[b]_\levi \cap \levi(\completemaxunramint)\mu'(p)\levi(\completemaxunramint)$ is not empty. 
\end{enumerate}
Then $I_{b, \{\mu\}, \levi}$ is finite and nonempty (see \cite{Rapoport-Viehmann14}, Lemma 8.1.).

\begin{lemma}
For any $\{\mu'\}_\levi \in I_{b, \{\mu\}, \levi}$, the tuple $(\levi, [b]_\levi, \{\mu'\}_\levi)$ is an unramified local Shimura datum of Hodge type. 
\end{lemma}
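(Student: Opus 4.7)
The plan is to verify, one by one, the three axioms from \ref{definition of unram local Shimura data of Hodge type} for the tuple $(\levi, [b]_\levi, \{\mu'\}_\levi)$. The underlying group $\levi$ is connected reductive over $\Z_p$ as a Levi factor of the connected reductive $\Z_p$-group $G$, and it is split over $\completemaxunramint$ since $G$ is.

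For axiom \ref{unram loc shimura datum condition 1}, minusculity of $\{\mu'\}_\levi$ will be inherited from minusculity of $\{\mu\}_G$: the roots of $\levi$ form a subset of the roots of $G$, and $\mu' \in \{\mu\}_G$, so the inequality $\langle \alpha, \mu' \rangle \in \{-1, 0, 1\}$ holds for every root $\alpha$ of $\levi$ because it already holds for every root of $G$. Axiom \ref{unram loc shimura datum condition 2} requires no work, as it is built directly into the definition of $I_{b, \{\mu\}, \levi}$.

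The only step that really requires attention is axiom \ref{unram loc shimura datum condition 3}. I would take the faithful $\levi$-representation to be the restriction of $\module$ to $\levi$; this remains faithful because the composition $\levi \hookrightarrow G \hookrightarrow \GL(\module)$ is injective. By axiom \ref{unram loc shimura datum condition 2} applied to $(\levi, [b]_\levi, \{\mu'\}_\levi)$, I may choose $b'' \in [b]_\levi \cap \levi(\completemaxunramint)\mu'(p)\levi(\completemaxunramint)$. Since $\mu'$ is $G(\completemaxunramint)$-conjugate to $\mu$ and $\levi(\completemaxunramint) \subseteq G(\completemaxunramint)$, this same $b''$ also lies in $[b]_G \cap G(\completemaxunramint)\mu(p)G(\completemaxunramint)$. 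Moreover, the isocrystal $N_{b''}(\module^* \otimes_{\Z_p} \Q_p)$ is the same object whether we regard $\module^*$ as a $G$- or $\levi$-representation, since $b'' \in \levi(\completemaxunram) \subseteq G(\completemaxunram)$. Hence the remark immediately following axiom \ref{unram loc shimura datum condition 3} in \ref{definition of unram local Shimura data of Hodge type} — which observes that existence of the required $\completemaxunramint$-lattice depends only on the $\sigma$-conjugacy class of the Frobenius — produces the lattice $\dieudonnemodule \simeq \module^* \otimes_{\Z_p} \completemaxunramint \subset N_{b''}(\module^* \otimes_{\Z_p} \Q_p)$ with $p\dieudonnemodule \subset F\dieudonnemodule \subset \dieudonnemodule$, as required.

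In short, the main (essentially only) substantive step is axiom \ref{unram loc shimura datum condition 3}, and it reduces cleanly to the cited remark together with the trivial observation that a faithful $G$-representation restricts to a faithful $\levi$-representation; no genuinely new input is needed.
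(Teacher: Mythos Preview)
Your proof is correct and follows essentially the same route as the paper: both verify axiom \ref{unram loc shimura datum condition 3} by choosing a representative in $[b]_\levi \cap \levi(\completemaxunramint)\mu'(p)\levi(\completemaxunramint)$, observing it lies in $[b]_G \cap G(\completemaxunramint)\mu(p)G(\completemaxunramint)$ since $\mu' \in \{\mu\}_G$, and then invoking the Hodge-type assumption on $(G, [b], \{\mu\})$. Your treatment is simply more explicit about axiom \ref{unram loc shimura datum condition 1} (the paper just says ``by construction''), and you spell out the appeal to the remark following \ref{definition of unram local Shimura data of Hodge type} where the paper leaves it implicit.
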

\begin{proof}
By construction, the tuple $(\levi, [b]_\levi, \{\mu'\}_\levi)$ satisfies the conditions \ref{unram loc shimura datum condition 1} and \ref{unram loc shimura datum condition 2} of \ref{definition of unram local Shimura data of Hodge type}. Hence it remains to check 
the condition \ref{unram loc shimura datum condition 3} of \ref{definition of unram local Shimura data of Hodge type}. After taking $\sigma$-conjugate in $\levi(\completemaxunram)$ if necessary, we may assume that $b \in \levi(\completemaxunramint)\mu'(p)\levi(\completemaxunramint)$. Then we have $b \in G(\completemaxunramint)\mu(p)G(\completemaxunramint)$ since $\mu' \in \{\mu\}$. Now we verify the condition \ref{unram loc shimura datum condition 3} with $b$ since $(G, [b], \{\mu\})$ is an unramified local Shimura datum of Hodge type. 
\end{proof}

We can now state the Harris-Viehmann conjecture in the setting of Rapoport-Zink spaces of Hodge type.

\begin{conj}[\cite{Rapoport-Viehmann14}, Conjecture 8.4.] \label{HV conjecture general form}
Choose an element $b \in [b] \cap G(\completemaxunramint)\mu(p)G(\completemaxunramint)$. Let $\parabolic$ be a parabolic subgroup of $G$ with Levi factor $\levi$ such that 
\begin{enumerate}[label=(\roman*)]
\item\label{HV conjecture nonemptiness condition} $[b] \cap \levi(\completemaxunram)$ is not empty, 
\item\label{HV conjecture Jb condition} $J_b$ is an inner form of a Levi subgroup of $G$ contained in $\levi$.  
\end{enumerate}
Choose representatives $\mu_1, \mu_2, \cdots, \mu_s$ of the $\levi(\completemaxunramint)$-conjugacy classes of cocharacters in $I_{b, \{\mu\}, \levi}$, and also choose $b_k \in [b]_\levi \cap \levi(\completemaxunramint)\mu_k(p)\levi(\completemaxunramint)$ for each $k=1, 2, \cdots, s$. Then for any admissible $\overline{\Q}_l$-representation $\rho$ of $J(\Q_p)$, we have an equality of virtual representations of $G(\Q_p) \times \weilgroup{\localreflexfield}$
\[ H^\bullet(\RZ^\infty_{G, b})_\rho = \bigoplus_{k=1}^s \text{Ind}_{\parabolic(\Q_p)}^{G(\Q_p)} H^\bullet(\RZ^\infty_{\levi, b_k})_\rho.\]
In particular, the virtual representation $H^\bullet(\RZ^\infty_{G, b})_\rho$ contains no supercuspidal representations of $G(\Q_p)$.
\end{conj}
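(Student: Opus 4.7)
The plan is to follow Mantovan's strategy for EL type Rapoport-Zink spaces, adapted to the Hodge type setting via the EL realization developed in \cite{Hong16}. Under the Hodge-Newton reducibility conditions (i), (ii) of the conjecture, for each chosen $\{\mu_k\}_\levi$ I would construct an intermediate Rapoport-Zink type space $\RZ_{\parabolic, b_k}$ sitting between $\RZ_{\levi, b_k}$ and $\RZ_{G, b}$, fitting into a diagram with a closed immersion $s$, an isomorphism $\pi_2$ on rigid generic fibers onto $\RZ_{G, b}^\rig$, and a fibration in balls $\pi_1$ onto $\RZ_{\levi, b_k}^\rig$. This geometric parabolic induction structure is then transferred to cohomology.

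First I would construct $\RZ_{\parabolic, b_k}$ by pull-back from the EL setting. Embed $(G, [b], \{\mu\}) \hookrightarrow (\EL{G}, [b], \{\mu\})$ into an EL local Shimura datum that is Hodge-Newton reducible with respect to a parabolic $\EL{\parabolic}$ with Levi $\EL{\levi}$ satisfying $\parabolic = \EL{\parabolic} \cap G$ and $\levi = \EL{\levi} \cap G$, as furnished by \cite{Hong16}. Proposition \ref{functoriality of rapoport-zink spaces} then yields closed immersions $\RZ_{G, b} \hookrightarrow \RZ_{\EL{G}, b}$ and $\RZ_{\levi, b_k} \hookrightarrow \RZ_{\EL{\levi}, b_k}$. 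Mantovan has already constructed $\RZ_{\EL{\parabolic}, b_k}$ together with maps satisfying the three lemma properties in the EL case, so one sets
\[ \RZ_{\parabolic, b_k} := \RZ_{\EL{\parabolic}, b_k} \times_{\RZ_{\EL{G}, b}} \RZ_{G, b} \]
and takes $\pi_1, \pi_2, s$ as base changes of the corresponding EL maps. The closed immersion property of $s$ and the isomorphism property of $\pi_2$ on generic fibers are preserved by base change along a closed immersion. The crucial step is to verify that $\pi_1$ remains a fibration in balls; here I would use the explicit deformation-theoretic description of the EL realization in \cite{Hong16} to identify each fiber of $\pi_1$ as the closed subspace of the corresponding EL fiber ball cut out by the Hodge type tensors, and check via the cascade structure underlying the Hodge type deformation theory that this subspace is itself an open ball of the correct dimension.

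Once the diagram is established, I would upgrade it to one of rigid analytic towers by intersecting the level $\levelatp \subseteq G(\Z_p)$ with $\parabolic(\Z_p)$ and $\levi(\Z_p)$, checking compatibility with the $\parabolic(\Q_p) \times J_{b_k}(\Q_p) \times \weilgroup{\localreflexfield}$-actions and with Weil descent data. Since compactly supported $\ell$-adic cohomology is invariant under a fibration in balls (shifted by twice the relative dimension and suitably Tate-twisted), and $\pi_2$ is a generic fiber isomorphism, I obtain a $\parabolic(\Q_p) \times J_{b_k}(\Q_p) \times \weilgroup{\localreflexfield}$-equivariant identification between $H^i(\RZ_{G, b}^\levelatp)$ and an appropriate Tate-twisted shift of $H^\bullet(\RZ_{\levi, b_k}^{\levelatp \cap \levi})$. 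Computing the $\Ext$ groups against $\rho$ and invoking Frobenius reciprocity for admissible representations (with the Tate twist absorbed into the normalizing modulus character) converts the $\parabolic(\Q_p)$-restriction into the parabolic induction $\text{Ind}_{\parabolic(\Q_p)}^{G(\Q_p)} H^\bullet(\RZ^\infty_{\levi, b_k})_\rho$; summing over $k$ gives the conjectured equality, and the absence of supercuspidals follows since parabolic induction from a proper parabolic can contain none. The main obstacle is the fibration-in-balls property for $\pi_1$ — a closed rigid analytic subspace of a ball need not itself be a ball — and this is precisely where the EL realization technique of \cite{Hong16} plays its essential role in forcing the right geometry.
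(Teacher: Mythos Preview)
The statement you are attempting is Conjecture~\ref{HV conjecture general form}, which the paper does \emph{not} prove in this generality; it is stated as an open conjecture. What the paper actually proves is Theorem~\ref{HV conjecture}, the special case where $(G,[b],\{\mu\})$ is Hodge--Newton reducible with respect to $\parabolic$ and $\levi$ in the sense of \ref{definition of hodge-newton reducibility}. Under that extra hypothesis the set $I_{b,\{\mu\},\levi}$ collapses to a single element (see \cite{Rapoport-Viehmann14}, Theorem~8.8, as recalled in \ref{definition of hodge-newton reducibility}), so there is no sum over $k$, and the argument via the EL realization and Lemma~\ref{diagram of three towers} goes through. Your outline, restricted to that case, matches the paper's proof closely.

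However, as a proof of the full conjecture your plan has two genuine gaps. First, you identify ``conditions (i), (ii) of the conjecture'' with Hodge--Newton reducibility; they are strictly weaker. The EL realization of \cite{Hong16} that you invoke (the embedding into $(\EL{G},[b],\{\mu\})$ Hodge--Newton reducible with respect to $\EL{\parabolic}$, $\EL{\levi}$ with $\parabolic=\EL{\parabolic}\cap G$, $\levi=\EL{\levi}\cap G$) is only available under the stronger Hodge--Newton reducibility hypothesis on $(G,[b],\{\mu\})$; without it there is no reason for such an EL envelope to exist, and Mantovan's geometry is unavailable to pull back. Second, and more seriously, your scheme is internally inconsistent when $s>1$: you claim that for \emph{each} $k$ the map $\pi_2$ is an isomorphism $\RZ_{\parabolic,b_k}^{\rig}\xrightarrow{\ \sim\ }\RZ_{G,b}^{\rig}$, which would force
\[
H^\bullet(\RZ^\infty_{G,b})_\rho \;=\; \text{Ind}_{\parabolic(\Q_p)}^{G(\Q_p)} H^\bullet(\RZ^\infty_{\levi,b_k})_\rho
\]
separately for every $k$, and summing then overcounts by a factor of $s$ rather than yielding the conjectured direct sum. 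In the non--Hodge--Newton--reducible situation the expected picture is that the various $\RZ_{\parabolic,b_k}^{\rig}$ should cover (or stratify) $\RZ_{G,b}^{\rig}$, not each be isomorphic to it; producing and controlling such a decomposition is exactly the open content of the conjecture and is not addressed by your argument.
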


Here we consider the groups $H^\bullet(\RZ^\infty_{\levi, b_k})_\rho$ as a virtual representation of $\parabolic(\Q_p) \times \weilgroup{\localreflexfield}$ by letting the unipotent radical of $\parabolic(\Q_p)$ act trivially. Note that the choice of $b_k$'s (or $\mu_k$'s) is unimportant since the isomorphism class of the spaces $\RZ^\infty_{\levi, b_k}$ only depend on the tuples $(\levi, [b]_\levi, \{\mu_k\}_\levi)$.

\subsubsection{} \label{definition of hodge-newton reducibility}

We will prove Conjecture \ref{HV conjecture general form} under the assumption that the datum $(G, [b], \{\mu\})$ is \emph{Hodge-Newton reducible} (with respect to $\parabolic$ and $\levi$). By definition, this means that there exist $\mu \in \{\mu\}$ and $b \in [b] \cap \levi(\completemaxunram)$ with the following properties:
\begin{enumerate}[label=(\roman*)]
\item\label{hodge-newton type mu factor condition} the cocharacter $\mu$ factors through $\levi$, 
\item\label{hodge-newton type levi condition} $[b]_\levi \cap \levi(\completemaxunramint)\mu(p)\levi(\completemaxunramint)$ is not empty,
\item\label{hodge-newton type cocharacters condition} in the action of $\mu$ and $\newtonmap_b$ on $\Lie (\unipotent) \otimes_{\Q_p} \completemaxunram$, only non-negative characters occur.
\end{enumerate}
Here $\newtonmap_b$ denote the \emph{Newton cocharacter} associated to $b$ (see \cite{Kottwitz85}, \S4 or \cite{Rapoport-Richartz96}, \S1 for definition.). Note that the properties \ref{hodge-newton type mu factor condition} and \ref{hodge-newton type levi condition} together imply that $\{\mu\}_\levi \in I_{b, \{\mu\}, \levi}$.

The notion of Hodge-Newton reducibility first appeared in \cite{Katz79}, where Katz considered $p$-divisible groups (and $F$-crystals) with the property that the Hodge polygon passes through a break point of the Newton polygon. For $G= \GL_n$, our notion of Hodge-Newton reducibility is equivalent to the notion considered by Katz. More precisely, if $\BT$ is a $p$-divisible group over $\overline{\F}_p$ that arises from the datum $(G, [b], \{\mu\})$ with a choice of $b \in [b] \cap G(\completemaxunramint)\mu(p)G(\completemaxunramint)$, the datum $(G, [b], \{\mu\})$ is Hodge-Newton reducible (with respect to some parabolic subgroup and its Levi factor) if and only if the Hodge polygon of $\BT$ passes through a break point of the Newton polygon of $\BT$. See  \cite{Rapoport-Viehmann14}, Remark 4.25 for more details.

We want to interpret the statement of Conjecture \ref{HV conjecture general form} under our assumption. Let us choose $\mu \in \{\mu\}$ and $b \in [b] \cap \levi(\completemaxunram)$ with the properties \ref{hodge-newton type mu factor condition}, \ref{hodge-newton type levi condition}, and \ref{hodge-newton type cocharacters condition} above.  After replacing by a $\sigma$-conjugate if necessary, we may assume that $b \in \levi(\completemaxunramint)\mu(p)\levi(\completemaxunramint)$. Then $b$ and $\levi$ clearly satisfy the condition \ref{HV conjecture nonemptiness condition} of Conjecture \ref{HV conjecture general form}. One can also check that $b$ and $\levi$ satisfy the condition \ref{HV conjecture Jb condition} of Conjecture \ref{HV conjecture general form} (see \cite{Rapoport-Viehmann14}, Remark 8.9.). Moreover, under our assumption the set $I_{b, \{\mu\}, \levi}$ consists of a single element, namely $\{\mu\}_\levi$ (see \cite{Rapoport-Viehmann14}, Theorem 8.8.).

Hence we may state our main theorem as follows:

\begin{theorem}\label{HV conjecture}
Assume that $(G, [b], \{\mu\})$ is Hodge-Newton reducible with respect to a standard parabolic subgroup $\parabolic$ with Levi factor $\levi$. Choose $\mu \in \{\mu\}$ and  $b \in \levi(\completemaxunramint)\mu(p)\levi(\completemaxunramint)$ with the properties \ref{hodge-newton type mu factor condition}, \ref{hodge-newton type levi condition} and \ref{hodge-newton type cocharacters condition} of \ref{definition of hodge-newton reducibility}. Then for any admissible $\overline{\Q}_l$-representation $\rho$ of $J(\Q_p)$, we have an equality of virtual representations of $G(\Q_p) \times \weilgroup{\localreflexfield}$
\[H^\bullet(\RZ^\infty_{G, b})_\rho = \text{Ind}_{\parabolic(\Q_p)}^{G(\Q_p)} H^\bullet(\RZ^\infty_{\levi, b})_\rho.\]
In particular, the virtual representation $H^\bullet(\RZ^\infty_{G, b})_\rho$ contains no supercuspidal representations of $G(\Q_p)$.
\end{theorem}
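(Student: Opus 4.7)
The plan is to execute the strategy sketched in the introduction: construct a Hodge type analogue $\RZ_{\parabolic, b}$ of the Rapoport-Zink space associated to the parabolic $\parabolic$, establish the Key Lemma describing how $\RZ_{\parabolic, b}^\rig$ sits between $\RZ_{\levi, b}^\rig$ and $\RZ_{G, b}^\rig$, and then deduce the cohomological identity by a standard Leray-type comparison argument. The essential external input will be Mantovan's verification of the conjecture in the EL type setting in \cite{Mantovan08}, which we transport to the Hodge type setting by means of the theory of EL realizations developed in \cite{Hong16}.

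First I would invoke an EL realization of $(G, [b], \{\mu\})$ to produce an unramified local Shimura datum of EL type $(\EL{G}, [b], \{\mu\})$, a closed embedding $G \hookrightarrow \EL{G}$ compatible with $[b]$ and $\{\mu\}$, and parabolic and Levi subgroups $\EL{\parabolic}, \EL{\levi}$ of $\EL{G}$ such that $\parabolic = \EL{\parabolic} \cap G$, $\levi = \EL{\levi} \cap G$, and such that $(\EL{G}, [b], \{\mu\})$ is Hodge-Newton reducible with respect to $(\EL{\parabolic}, \EL{\levi})$ via the same $\mu$ and $b$. Proposition \ref{functoriality of rapoport-zink spaces}(2) then yields a $J_b(\Q_p)$-equivariant closed embedding $\RZ_{G, b} \hookrightarrow \RZ_{\EL{G}, b}$ compatible with the Weil descent data. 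On the EL side Mantovan constructs a formal $\RZ_{\EL{G}, b}$-scheme $\RZ_{\EL{\parabolic}, b}$ and proves the EL analogue of the Key Lemma for the triple $\RZ_{\EL{\levi}, b}, \RZ_{\EL{\parabolic}, b}, \RZ_{\EL{G}, b}$. I would then set
\[
\RZ_{\parabolic, b} := \RZ_{G, b} \times_{\RZ_{\EL{G}, b}} \RZ_{\EL{\parabolic}, b},
\]
and construct $\pi_1$, $\pi_2$, $s$ as base changes of Mantovan's EL maps, using the embedding $\RZ_{\levi, b} \hookrightarrow \RZ_{\EL{\levi}, b}$ obtained by applying the EL realization to the Levi datum $(\levi, [b], \{\mu\})$.

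The three properties in the Key Lemma must now be verified. That $s$ is a closed immersion and that $\pi_1$ is a fibration in balls are both stable under base change, so they will descend from the EL case once one checks that the base change along $\RZ_{G, b} \hookrightarrow \RZ_{\EL{G}, b}$ is compatible with the map $\EL{\pi}_1$ landing in $\RZ_{\EL{\levi}, b}$ and the inclusion $\RZ_{\levi, b} \hookrightarrow \RZ_{\EL{\levi}, b}$. This compatibility reflects the identity $\levi = \EL{\levi} \cap G$ and is ensured by the cartesian behaviour of EL realizations on the chain $(G, \parabolic, \levi) \subseteq (\EL{G}, \EL{\parabolic}, \EL{\levi})$. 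The assertion that $\pi_2$ is an isomorphism is the crux of the argument: since $\EL{\pi}_2$ is already an isomorphism on rigid generic fibres, it suffices to show that every point of $\RZ_{\EL{\parabolic}, b}^\rig$ lying over $\RZ_{G, b}^\rig \subseteq \RZ_{\EL{G}, b}^\rig$ automatically belongs to $\RZ_{\parabolic, b}^\rig$. Equivalently, one must show that at each such point the canonical Hodge-Newton filtration on the underlying $p$-divisible group respects the universal tensors $(\liftlow{t}^\univ_i)$ and that its subquotients acquire natural $\levi$-structures. I expect this compatibility to be the principal technical obstacle; its resolution should rest on the results of \cite{Hong16} establishing that the Hodge-Newton filtration of an EL realization is canonical and descends to the Hodge type datum precisely because of the intersection relation $\parabolic = \EL{\parabolic} \cap G$.

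Granted the Key Lemma, Theorem \ref{HV conjecture} follows by propagating the diagram to the analytic towers $\analtower{\RZ_{\parabolic, b}}$ and performing a cohomological comparison. The towers inherit a natural action of $\parabolic(\Q_p) \times \weilgroup{\localreflexfield} \times J_b(\Q_p)$: the isomorphism $\pi_2$ at each finite level $\levelatp$ yields $H^i(\RZ_{G, b}^{\levelatp}) \cong H^i(\RZ_{\parabolic, b}^{\levelatp \cap \parabolic(\Q_p)})$ as $\parabolic(\Q_p)$-modules, while the fibration in balls $\pi_1$, whose fibres are rigid analytically trivial, forces the associated Leray spectral sequence to collapse and identifies this cohomology with $H^i(\RZ_{\levi, b}^{\levelatp \cap \levi(\Q_p)})$ up to an appropriate Tate twist accounting for the fibre dimension. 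Combining these identifications with the Iwasawa decomposition $G(\Q_p) = \parabolic(\Q_p) \cdot G(\Z_p)$ and taking $\Ext^\bullet_{J_b(\Q_p)}(-, \rho)$ realizes $H^\bullet(\RZ^\infty_{G, b})_\rho$ as the parabolic induction from $\parabolic(\Q_p)$ to $G(\Q_p)$ of $H^\bullet(\RZ^\infty_{\levi, b})_\rho$. The final claim concerning supercuspidal representations is then automatic, as supercuspidals never occur as subquotients of proper parabolic inductions.
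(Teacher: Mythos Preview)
Your overall architecture matches the paper's: define $\RZ_{\parabolic,b}$ as the fibre product $\RZ_{G,b}\times_{\RZ_{\EL{G},b}}\RZ_{\EL{\parabolic},b}$, establish the Key Lemma, and then run the cohomological comparison. The final deduction (collapse of Leray along the fibration in balls, followed by parabolic induction via the decomposition of $\RZ_{G,b}^{\levelatp}$ over $\levelatp\backslash G(\Q_p)/\parabolic(\Q_p)$) is also what the paper does in Propositions \ref{comparing cohomologies of levi and parabolic tower} and \ref{comparing cohomologies of parabolic and original tower}.

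However, you have the roles of $\pi_1$ and $\pi_2$ reversed when you locate the difficulty. With your definition of $\RZ_{\parabolic,b}$ as a fibre product over $\RZ_{\EL{G},b}$, the map $\pi_2$ is literally the base change of $\EL{\pi}_2$, so the fact that $\pi_2^\rig$ is an isomorphism is \emph{automatic} once you know $\EL{\pi}_2^\rig$ is; there is nothing to prove. The substantive content you describe---that the Hodge--Newton filtration respects the tensors $(\liftlow{t}_i^\univ)$ and that the graded pieces carry $\levi_j$-structure---is exactly what is needed to show that the restriction of $\EL{\pi}_1$ to $\RZ_{\parabolic,b}$ factors through $\RZ_{\levi,b}\hookrightarrow\RZ_{\EL{\levi},b}$. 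In the paper this is Lemma \ref{diagram of three towers}, and it is checked on $\overline{\F}_p$-points and on formal completions using the results listed in \ref{Hodge-Newton decomposition and filtration} from \cite{Hong16}.

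Relatedly, your assertion that ``$\pi_1$ is a fibration in balls'' descends by base change is not quite right as stated. Base change would apply if the square with vertices $\RZ_{\parabolic,b},\RZ_{\EL{\parabolic},b},\RZ_{\levi,b},\RZ_{\EL{\levi},b}$ were Cartesian, but $\RZ_{\parabolic,b}$ is defined as a fibre product over $\RZ_{\EL{G},b}$, not over $\RZ_{\EL{\levi},b}$, and the two are not a priori the same. The paper does not attempt to prove such a Cartesian property; instead, once $\pi_1$ is known to land in $\RZ_{\levi,b}$, the ball-fibration property is verified directly by identifying the formal completion of $\RZ_{\parabolic,b}$ at $s(x)$ with a deformation space of $\BT_x$ with Tate tensors, which is a formal power series ring by Faltings \cite{Faltings99}. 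You should plan to do the same.
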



\subsection{Rigid analytic tower associated to the parabolic subgroup}\label{Rigid analytic tower associated to the parabolic subgroup}$ $

For our proof of Theorem \ref{HV conjecture}, we construct an intermediate tower of rigid analytic spaces associated to the parabolic subgroup $\parabolic$.

\subsubsection{}\label{notations for EL realization}

For the rest of this section, we will always keep the assumption and the notations in the statement of Theorem \ref{HV conjecture}. In addition, we write $\generalhodge{\BT} = (\BT, (t_i))$ for the $p$-divisible group with $G$-structure that arises from the datum $(G, [b], \{\mu\})$ with the choice $b \in [b]$.

By \cite{Hong16}, Lemma 3.1.4,  we can choose a group $\EL{G}$ of EL type with the following properties:
\begin{enumerate}[label=(\roman*)]
\item the embedding $G \hookrightarrow \GL(\module)$ factors through $\EL{G}$,
\item the datum $(\EL{G}, [b], \{\mu\})$ is Hodge-Newton reducible with respect to a proper parabolic subgroup $\EL{\parabolic}$ of $\EL{G}$ and its Levi factor $\EL{\levi}$ such that $\parabolic = \EL{\parabolic} \cap G$ and $\levi = \EL{\levi} \cap G$. 
\end{enumerate}
In general, the group $\EL{G}$ is of the form
\[\EL{G}= \Res_{\integerring_1|\Z_p} \GL_{n_1} \times \Res_{\integerring_2|\Z_p} \GL_{n_2} \times \Res_{\integerring_f|\Z_p} \GL_{n_f}\]
where each $\integerring_j$ is the ring of integers for some finite unramified extension of $\Q_p$. However, in light of functorial properties in 
Proposition \ref{functoriality of rapoport-zink spaces}, we may assume for simplicity that
\[\EL{G}= \ELgroupintegral\] 
where $\integerring$ is the integer ring of some finite unramified extension of $\Q_p$. Then the Levi subgroup $\EL{\levi}$ takes the form 
\begin{equation}\label{EL levi decomp}\EL{\levi} = \ELgroupintegral[m_1] \times \ELgroupintegral[m_2] \times \cdots \times \ELgroupintegral[m_r]. \end{equation}
For each $j=1, 2, \cdots, r$, we define the following data:
\begin{itemize}
\item $\EL{\levi}_j$ is the $j$-th factor in the decomposition \eqref{EL levi decomp},
\item $\levi_j$ is the image of $\levi$ under the projection $\EL{\levi} \twoheadrightarrow \EL{\levi}_j$,
\item $b_j$ is the image of $b$ under the projection $\levi \twoheadrightarrow \levi_j$,
\item $\mu_j$ is the cocharacter of $\levi_j$ induced from $\mu$ via the projection $\levi \twoheadrightarrow \levi_j$.
\end{itemize}

\subsubsection{}\label{Hodge-Newton decomposition and filtration}

For any $x \in \RZ_{G, b}(\overline{\F}_p)$, we write $(\BT_x, (t_{x, i}), \genqisog_x)$ for the corresponding tuple under the moduli description of $\RZ_{G, b}(\overline{\F}_p)$ described in \ref{closed points of RZ functor}, and $\generalhodge{\BT}_x := (\BT_x, (t_{x, i}))$ for the associated $p$-divisible group with $G$-structure. Then we have the following facts from \cite{Hong16}, \S3.2:
\begin{enumerate}[label=(\arabic*)]
\item\label{levi local shimura data} The tuples ($\EL{\levi}_j, [b_j], \{\mu_j\})$ and $(\levi_j, [b_j], \{\mu_j\})$ are unramified local Shimura data of Hodge type for each $j=1, 2, \cdots, s$.
\item\label{Hodge-Newton decomp affine Deligne-Lusztig sets} There is a natural map of the affine Deligne-Lusztig sets
\[\DeligneLusztig{G}{\mu}{b} \stackrel{\sim}{\longrightarrow} \DeligneLusztig{\levi}{\mu}{b} \hooklongrightarrow \DeligneLusztig{\levi_1}{ \mu_1}{ b_1}  \times \cdots \times \DeligneLusztig{\levi_r}{ \mu_r}{ b_r},\]
which induces a natural map
\[ \RZ_{G, b}(\overline{\F}_p) \stackrel{\sim}{\longrightarrow} \RZ_{\levi, b}(\overline{\F}_p) \hooklongrightarrow \RZ_{\levi_1, b_1}(\overline{\F}_p) \times \cdots \times \RZ_{\levi_r, b_r}(\overline{\F}_p)\]
via the natural bijections between the affine Deligne-Lusztig sets and the set of $\overline{\F}_p$-valued points of the Rapoport-Zink spaces (see \ref{closed points of RZ spaces and affine Deligne Lusztig set}.). 
\item\label{Hodge-Newton decomp} The second map in \ref{Hodge-Newton decomp affine Deligne-Lusztig sets} induces a decomposition
\begin{equation*}\generalhodge{\BT}_x = \generalhodge{\BT}_{x_1} \times  \generalhodge{\BT}_{x_2} \times \cdots \times \generalhodge{\BT}_{x_r}\end{equation*}
where $\generalhodge{\BT}_{x_j}$ is the $p$-divisible group with $\levi_j$-structure corresponding to the image of $x$ in $\RZ_{\levi_j, b_j}(\overline{\F}_p)$. 
\item\label{Hodge-Newton filtration} If we set $\BT_x^{(j)} := \BT_{x_j} \times \BT_{x_{j+1}} \times \cdots \times \BT_{x_r}$ for each $j=1, 2, \cdots, r$, the decomposition in \ref{Hodge-Newton decomp} induces a filtration
\[0 \subset \BT_x^{(r)} \subset \BT_x^{(r-1)} \subset \cdots \subset \BT_x^{(1)} = \BT_x\]
such that each quotient $\BT_x^{(j)}/\BT_x^{(j+1)} \simeq \BT_{x_j}$ carries $\levi_j$-structure that arises from the datum $(\levi_j, [b_j], \{\mu_j\})$.
\item\label{lifting the Hodge-Newton filtration} Given a formally smooth $\completemaxunramint$-algebra of the form $\genring = \completemaxunramint[[u_1, \cdots, u_N]]$ or $\genring = \completemaxunramint[[u_1, \cdots, u_N]]/(p^m)$ and a deformation $\deform{\generalhodge{\BT}}_x = (\deform{\BT}_x, (\liftlow{t}_i)) \in \Def_{\BT_x, G}(\genring)$ with an isomorphism $\alpha: \deform{\BT}_x \otimes_\genring \overline{\F}_p \cong \BT_x$, there exists a unique filtration of $\deform{\BT}$
\[ 0 \subset \deform{\BT}_x^{(r)} \subset \deform{\BT}_x^{(r-1)} \subset \cdots \subset \deform{\BT}_x^{(1)} = \deform{\BT}_x \]
which lifts the filtration in \ref{Hodge-Newton filtration} in the following sense: for each $j=1, 2, \cdots, r$, the isomorphism $\alpha$ induces isomorphisms $\alpha^{(j)}: \deform{\BT}_x^{(j)} \otimes_\genring \overline{\F}_p \cong \BT_x^{(j)}$ and $\alpha_j: ({\deform{\BT}_x^{(j)}/\deform{\BT}_x^{(j+1)}}) \otimes_\genring \overline{\F}_p \cong {\BT}_{x_j}$ such that $(\deform{\BT}_x^{(j)}/\deform{\BT}_x^{(j+1)}, (\liftlow{t}_i^{(j)})) \in \Def_{\BT_{x_j}, \levi_j}(\genring)$ for some family of tensors $(\liftlow{t}_i^{(j)})$ on $\D(\deform{\BT}_x^{(j)}/\deform{\BT}_x^{(j+1)})$. 
\end{enumerate}
We refer to the decomposition in \ref{Hodge-Newton decomp} and the filtration in \ref{Hodge-Newton filtration} respectively as the \emph{Hodge-Newton decomposition} and the \emph{Hodge-Newton filtration} of $\generalhodge{\BT}_x$ (with respect to $\parabolic$ and $\levi$). If we take $x$ such that $\BT_x = \BT$, $(t_{x, i}) = (t_i)$ and $\genqisog_x$ is the identity map on $\BT$, we obtain the Hodge-Newton decomposition of $\generalhodge{\BT}$ 
\begin{equation}\label{Hodge-Newton decomp p-divisible group}\generalhodge{\BT} = \generalhodge{\BT}_{1} \times  \generalhodge{\BT}_{2} \times \cdots \times \generalhodge{\BT}_{r}\end{equation}
and the corresponding Hodge-Newton filtration of $\generalhodge{\BT}$ 
\begin{equation}\label{Hodge-Newton filtration p-divisible group} 0 \subset \BT^{(r)} \subset \BT^{(r-1)} \subset \cdots \subset \BT^{(1)} = \BT\end{equation}
where each quotient $\BT^{(j)}/\BT^{(j+1)} \simeq \BT_{j}$ carries $\levi_j$-structure that arises from the datum $(\levi_j, [b_j], \{\mu_j\})$ with the choice $b_j \in [b_j]$.

\subsubsection{} \label{RZ space associated to EL parabolic}


Following Mantovan in \cite{Mantovan08}, Definition 9, we define a set-valued functor $\RZ_{\EL{\parabolic}, b}$ on $\Nilp_{\completemaxunramint}$ as follows: for any $\genring \in \Nilp_{\completemaxunramint}$, we set $\RZ_{\EL{\parabolic}, b}(\genring)$ to be the set of isomorphism classes of triples $(\lift{\BT}, \lift{\BT}^\bullet, \genqisog)$ where
\begin{itemize}
\item $\lift{\BT}$ is a $p$-divisible group over $\genring$ with an action of $\integerring$ (see Example \ref{rapoport-zink spaces of EL type});
\item $\lift{\BT}^\bullet$ is a filtration of $p$-divisible groups over $\genring$
\[0 \subset \lift{\BT}^{(r)} \subset \lift{\BT}^{(r-1)}\subset \cdots \subset \lift{\BT}^{(1)} = \lift{\BT}\] 
which is preserved by the action of $\integerring$ such that the quotients $\lift{\BT}^{(j)}/\lift{\BT}^{(j+1)}$ are $p$-divisible groups (with the induced action of $\integerring$);
\item $\genqisog: \BT_{\genring/p} \to \lift{\BT}_{\genring/p}$ is a quasi-isogeny which is compatible with the action of $\integerring$ and induces quasi-isogenies $\genqisog^{(j)}: \BT^{(j)}_{\genring/p} \longrightarrow \lift{\BT}^{(j)}_{\genring/p}$ for $j=1, 2, \cdots, r$,
\end{itemize}
such that for all $ a \in \integerring$ and $j=1, 2, \cdots, r$,
\[ \det {}_\genring(a, \Lie(\lift{\BT}^{(j)})) = \det(a, \Fil^0(\BT^{(j)})_{\completemaxunram}).\]
Mantovan in \cite{Mantovan08}, Proposition 11 proved that the functor $\RZ_{\EL{\parabolic}, b}$ is represented by a formal scheme which is formally smooth and locally formally of finite type over $\completemaxunramint$. We write $\RZ_{\EL{\parabolic}, b}$ also for this representing formal scheme, and $\RZ_{\EL{\parabolic}, b}^\rig$ for its rigid analytic generic fiber. In addition, we write $\lift{\BT}_{\EL{\parabolic}, b}$ and $\lift{\BT}^\bullet_{\EL{\parabolic}, b}$ respectively for the universal filtered $p$-divisible group over $\RZ_{\EL{\parabolic}, b}$ and the associated ``universal filtration''.

\begin{remark} 
As in \cite{Mantovan08}, Definition 10, we can also define a tower of \'etale covers $\analtower{\RZ_{\EL{\parabolic},b}}=\{\RZ_{\BT, \EL{\parabolic}}^{\EL{\levelatp}'}\}$ over $\RZ_{\EL{\parabolic}, b}^\rig$ with a natural action of $\EL{\parabolic}(\Q_p) \times J_b(\Q_p)$ and a Weil descent datum over $\localreflexfield$, where $\EL{\levelatp}'$ runs over open and compact subgroups of $\EL{\parabolic}(\Z_p)$.
\end{remark}

\subsubsection{}\label{RZ space associated to parabolic}

 By the functoriality of Rapoport-Zink spaces described in Proposition \ref{functoriality of rapoport-zink spaces}, the embedding $G \hooklongrightarrow \EL{G}$ induces a closed embedding 
\[ \RZ_{G, b} \hooklongrightarrow \RZ_{\EL{G}, b}.\]
In addition, we have a natural map 
\[ \EL{\pi}_2: \RZ_{\EL{\parabolic}, b} \longrightarrow \RZ_{\EL{G}, b}\]
defined by $(\lift{\BT}, \lift{\BT}^\bullet, \genqisog)  \mapsto (\lift{\BT}, \genqisog)$ on the points. We define $\RZ_{\parabolic, b} := \RZ_{\EL{\parabolic}, b} \times_{\RZ_{\EL{G}, b}} \RZ_{G, b}$. Then we have the following Cartesian diagram:
\begin{center}
\begin{tikzpicture}[description/.style={fill=white,inner sep=2pt}]
\matrix (m) [matrix of math nodes, row sep=4em,
column sep=3em, text height=1.5ex, text depth=0.25ex]
{ \RZ_{\parabolic, b} & \RZ_{G, b} \\
\RZ_{\EL{\parabolic, b}} & \RZ_{\EL{G}, b}\\
};
\path[-stealth]
    (m-1-1.east) edge node [above] {$\pi_2$} (m-1-2)
    (m-2-1.east) edge node [above] {$\EL{\pi}_2$}(m-2-2);
\draw [right hook-latex] (m-1-1.south) -- (m-2-1);
\draw [right hook-latex] (m-1-2.south) -- (m-2-2);
\end{tikzpicture}
\end{center}
Moreover, $\pi_2$ is a local isomorphism which gives an isomorphism on the rigid analytic generic fiber since $\EL{\pi}_2$ has the same properties (see \cite{Mantovan08}, Theorem 36 and \cite{Shen13}, Proposition 6.3.).


We want to describe the universal property of the closed embedding $\RZ_{\parabolic, b} \hooklongrightarrow \RZ_{\EL{\parabolic}, b}$ in an analogous way to the universal property of $\RZ_{G, b} \subset \RZ_b$ described in \ref{rapoport-zink space of hodge type as a functor}. For this, we choose a decomposition of $\module$
\[ \module = \module_1 \oplus \module_2 \oplus \cdots \oplus \module_r\]
corresponding to the decomposition of $\EL{\levi}$ in \eqref{EL levi decomp}. We set $\module^{(j)}= \module_1 \oplus \cdots \oplus \module_j$ for $j=1, 2, \cdots, r$, and denote by $\module^\bullet$ the filtration
\[ 0 \subset \module^{(1)} \subset \cdots \subset \module^{(r)} = \module.\]
Then for any $\Z_p$-algebra $\genring$ we have
\[\parabolic(\genring) = \{g \in G(\genring): g(\module_\genring^{\bullet}) = \module_\genring^{\bullet}\}.\]

Now consider a morphism $f: \Spf (\genring) \to \RZ_{\EL{\parabolic}, b}$ for some $\genring \in \Nilp_{\completemaxunramint}$. Let $(\lift{\BT}, \lift{\BT}^\bullet)$ be a $p$-divisible group over $\Spec (\genring)$ with a filtration which pulls back to $(f^*\lift{\BT}_{\EL{\parabolic}, b}, f^*\lift{\BT}_{\EL{\parabolic}, b}^\bullet)$ over $\Spf(\genring)$. We denote by $\D(\lift{\BT}^\bullet)$ the filtration of Dieudonn\'e modules
\[ 0 = \D(\lift{\BT}/\lift{\BT}^{(1)})  \subset \D(\lift{\BT}/\lift{\BT}^{(2)}) \subset \cdots \subset \D(\lift{\BT}/\lift{\BT}^{(r)}) \subset  \D(\lift{\BT})\]
induced by $\lift{\BT}^\bullet$ via (contravariant) Dieudonne theory. We choose tensors $(\liftother{t}_i)$ on $\D(\lift{\BT})[1/p]$ 
as in \ref{rapoport-zink space of hodge type as a functor}. Then $f$ factors through $\RZ_{\parabolic, b}$ if and only if $\EL{\pi}_2 \circ f$ factors through $\RZ_{G, b} \hooklongrightarrow \RZ_{\EL{G},b}$, which is equivalent to existence of a (unique) family of tensors $(\liftlow{t}_i)$ on $\D(\lift{\BT})$ 
such that
\begin{enumerate}[label=(\roman*)]
\item\label{parabolic RZ space isogeny condition} for some ideal of definition $J$ of $\genring$ containing $p$, the pull-back of $(\liftlow{t}_i)$ over $\genring/J$ agrees with the pull-back of $(\liftother{t}_i)$ over $\genring/J$,
\item\label{parabolic RZ space parabolic action condition} for a $p$-adic lift $\lift{\genring}$ of $\genring$ which is formally smooth over $\completemaxunramint$, the $\lift{\genring}$-scheme
\[ \isommatchingtensors_{\lift{\genring}} := \Isom_{\lift{\genring}}\Big( [\D(\lift{\BT})_{\lift{\genring}}, (\liftlow{t}_i)_{\lift{\genring}}], [ \module^* \otimes_{\Z_p} \lift{\genring} , (s_i \otimes 1)]\Big)\]
defined in \ref{rapoport-zink space of hodge type as a functor} is a $G$-torsor, and consequently the $\lift{\genring}$-scheme
\[ \isommatchingtensors'_{\lift{\genring}} := \Isom_{\lift{\genring}}\Big( [\D(\lift{\BT^\bullet})_{\lift{\genring}}, (\liftlow{t}_{i})_{\lift{\genring}}], [ (\module^{\bullet})^* \otimes_{\Z_p} \lift{\genring} , (s_i \otimes 1)]\Big)\]
is a $\parabolic$-torsor,
\item\label{parabolic RZ space Hodge filtration condition} the Hodge filtration of $\lift{\BT}$ 
is a $\{\mu\}$-filtration with respect to $(\liftlow{t}_i)$.
\end{enumerate}
Here the scheme $\isommatchingtensors'_{\lift{\genring}}$ in \ref{parabolic RZ space parabolic action condition} classifies the isomorphisms $\D(\lift{\BT})_{\lift{\genring}} \cong \module^*_{\lift{\genring}}$ which map the tensors $(\liftlow{t}_i)$ to $(s_i \otimes 1)$ and the filtration $\D(\lift{\BT^\bullet})_\genring$ to $(\module^{\bullet})^* \otimes_{\Z_p} \lift{\genring}$. 

We obtain the ``universal $p$-divisible group" $\lift{\BT}_{\parabolic, b}$ over $\RZ_{\parabolic, b}$ with the associated 	``universal filtration'' $\lift{\BT}_{\parabolic, b}^\bullet$ by taking the pull-back of $\lift{\BT}_{\EL{\parabolic}, b}$ and $\lift{\BT}_{\EL{\parabolic}, b}^\bullet$ over $\RZ_{\parabolic, b}$. We also obtain a family of ``universal tensors" $(\liftlow{t}^{\univ, \parabolic}_i)$ on $\D(\lift{\BT}_{\parabolic, b})$ by applying the universal property to an open affine covering of $\RZ_{\parabolic, b}$. Moreover, this family has a ``\'etale realization'' $(\liftlow{t}^{\univ, \parabolic}_{i, \et})$ on the Tate module $\tatemodule{\lift{\BT}_{\parabolic, b}}$ (see \cite{Kim13}, Theorem 7.1.6.).

\subsubsection{}\label{cohomology of parabolic RZ space}

The formal scheme  $\RZ_{\parabolic, b}$ is formally smooth and locally formally of finite type over $\completemaxunramint$ by construction. Hence it admits a rigid analytic generic fiber which we denote by $\RZ_{\EL{\parabolic}, b}^\rig$. Moreover, since $\pi_2$ gives an isomorphism on the rigid analytic generic fiber, we have a $J_b(\Q_p)$-action and a Weil descent datum over $\localreflexfield$ on $\RZ_{\parabolic, b}^\rig$ induced by the corresponding structures on $\RZ_{G, b}^\rig$.

For any open compact subgroup $\levelatp'$ of $\parabolic(\Z_p)$, we define the following rigid analytic \'etale cover of $\RZ_{\parabolic, b}^\rig$:
\[\RZ_{\parabolic, b}^{\levelatp'} := \Isom_{\RZ_{\parabolic, b}^\rig}\Big( [\module^\bullet, (s_i)],  [ \tatemodule{\lift{\BT}^\bullet_{\parabolic, b}} , (\liftlow{t}^{\univ, \parabolic}_{i, \et}) ] \Big)\Big/\levelatp'.\]
The $J_b(\Q_p)$-action and the Weil descent datum over $\localreflexfield$ on $\RZ_{\parabolic, b}^\rig$ pull back to $\RZ_{\parabolic, b}^{\levelatp'}$.  We denote by $\analtower{\RZ_{\parabolic, b}}: = \{\RZ_{\parabolic, b}^{\levelatp'}\}$ the tower of these covers with Galois group $\parabolic(\Z_p)$. The Galois action on this tower gives rise to a natural $\parabolic(\Q_p)$-action which commutes with the $J_b(\Q_p)$-action and the Weil descent datum over $\localreflexfield$ (cf. \cite{Kim13}, Proposition 7.4.8.). Hence the cohomology groups 
\[ H^i(\RZ_{\parabolic, b}^{\levelatp'}) = H^i_c (\RZ_{\parabolic, b}^\levelatp \otimes_{\completemaxunram} \C_p, \Q_l( \dim \RZ_{\parabolic, b}^{\levelatp'}))\]
form a tower $\{ H^i(\RZ_{\parabolic, b}^{\levelatp'})\}$ for each $i$, which are endowed with a natural action of $\parabolic(\Q_p) \times \weilgroup{\localreflexfield} \times J_b(\Q_p)$. Moreover, for any admissible $l$-adic representation $\rho$ of $J_b(\Q_p)$, the groups
\[H^{i, j}(\analtower{\RZ_{\parabolic, b}})_\rho := \varinjlim_{\levelatp'} \Ext^j_{J_b(\Q_p)} (H^i(\RZ_{\parabolic, b}^{\levelatp'}), \rho)\]
satisfy the following properties (cf. \ref{cohomology of rigid analytic tower}):
\begin{enumerate}
\item The groups $H^{i, j}(\analtower{\RZ_{\parabolic, b}})_\rho$ vanish for almost all $i, j$. 
\item There is a natural action of $\parabolic(\Q_p) \times \weilgroup{\localreflexfield}$ on each $H^{i, j}(\analtower{\RZ_{ \parabolic, b}})_\rho$. 
\item The representations $H^{i, j}(\analtower{\RZ_{\parabolic, b}})_\rho$ are admissible. 
\end{enumerate}
We can thus define a virtual representation of $\parabolic(\Q_p) \times \weilgroup{\localreflexfield}$
\[ H^\bullet(\RZ^\infty_{\parabolic,b})_\rho := \sum_{i, j \geq 0} (-1)^{i+j} H^{i, j} (\analtower{\RZ_{\parabolic, b}})_\rho.\]

\begin{remark}
Alternatively, we can obtain the tower $\analtower{\RZ_{\parabolic, b}}$ as the pull-back of the tower $\analtower{\RZ_{\EL{\parabolic}, b}}$ over $\RZ_{\parabolic,b}^\rig$. 
\end{remark}

\subsection{Harris-Viehmann conjecture: proof}$ $

We finally present our proof of Theorem \ref{HV conjecture}. We retain all the notations from \ref{Rigid analytic tower associated to the parabolic subgroup}. 

\begin{lemma}\label{diagram of three towers} 
There exists a diagram
\begin{center}
\begin{tikzpicture}[description/.style={fill=white,inner sep=2pt}]
\matrix (m) [matrix of math nodes, row sep=3em,
column sep=1.5em, text height=1.5ex, text depth=0.25ex]
{  & \RZ_{\parabolic, b}^\rig & \\
\RZ_{\levi, b}^\rig& & \RZ_{G, b}^\rig\\
};
\path[-stealth]
    (m-1-2) edge node [below] {\hspace{.8em}$\pi_1$} (m-2-1)
    (m-2-1) edge[bend left] node [above] {$s$} (m-1-2)
    (m-1-2) edge node [below] {$\pi_2$\hspace{.8em}$ $} (m-2-3);
\end{tikzpicture}
\end{center}
such that
\begin{enumerate}[label=(\arabic*)]
\item\label{key lemma s} $s$ is a closed immersion, 
\item\label{key lemma pi_1} $\pi_1$ is a fibration in balls, 
\item\label{key lemma pi_2} $\pi_2$ is an isomorphism. 
\end{enumerate}
\end{lemma}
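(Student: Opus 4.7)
The strategy is to reduce the three statements to Mantovan's analogous diagram for the EL-type datum $(\EL{G}, [b], \{\mu\})$, already established in \cite{Mantovan08}. Specifically, Mantovan produced a corresponding diagram relating $\RZ_{\EL{\levi},b}^\rig$, $\RZ_{\EL{\parabolic},b}^\rig$, and $\RZ_{\EL{G},b}^\rig$ via maps $\EL{s}, \EL{\pi}_1, \EL{\pi}_2$, in which $\EL{s}$ is a closed immersion, $\EL{\pi}_1$ is a fibration in open balls, and $\EL{\pi}_2$ is an isomorphism. Concretely, $\EL{\pi}_1$ sends $(\lift{\BT}, \lift{\BT}^\bullet, \genqisog)$ to the associated graded $\bigoplus_j \lift{\BT}^{(j)}/\lift{\BT}^{(j+1)}$ with its induced quasi-isogeny, and $\EL{s}$ equips a point of $\RZ_{\EL{\levi},b}^\rig$ with the tautological decreasing filtration.

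The core step is to promote Mantovan's diagram to a Cartesian identification against the closed embedding $\RZ_{\levi,b}^\rig \hookrightarrow \RZ_{\EL{\levi},b}^\rig$, namely
\begin{equation*}
\RZ_{\parabolic,b}^\rig \;\cong\; \RZ_{\EL{\parabolic},b}^\rig \times_{\RZ_{\EL{\levi},b}^\rig} \RZ_{\levi,b}^\rig.
\end{equation*}
Granting this, we obtain the diagram by setting $\pi_1, \pi_2, s$ to be the pullbacks of $\EL{\pi}_1, \EL{\pi}_2, \EL{s}$, since fibrations in balls, isomorphisms, and closed immersions are all stable under base change; the fact that $\pi_2$ is an isomorphism is already noted in \ref{RZ space associated to parabolic}. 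To establish the Cartesian identification, I would argue on moduli. By definition and the isomorphism $\EL{\pi}_2^{\rig}$, a point of $\RZ_{\parabolic,b}^\rig$ is a tuple $(\lift{\BT},\lift{\BT}^\bullet,\genqisog)\in\RZ_{\EL{\parabolic},b}^\rig$ whose underlying pair $(\lift{\BT},\genqisog)$ carries a $G$-structure via crystalline tensors $(\liftlow{t}_i)$ on $\D(\lift{\BT})$. Since $\parabolic$ is the stabilizer in $G$ of the filtration $\module^\bullet$ and $\levi$ its Levi quotient, these tensors reduce to a $\parabolic$-torsor structure on the filtered Dieudonn\'e module $\D(\lift{\BT}^\bullet)$ in the sense of \ref{RZ space associated to parabolic}\ref{parabolic RZ space parabolic action condition}, and thereby descend via $\parabolic\twoheadrightarrow\levi$ to tensors on the associated graded cutting out an $\levi$-torsor. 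This exhibits $\EL{\pi}_1(\lift{\BT},\lift{\BT}^\bullet,\genqisog)$ as a point of $\RZ_{\levi,b}^\rig$, giving the required map; the converse inclusion is immediate, as a $\parabolic$-structure is tautologically a $G$-structure together with a compatible $\levi$-structure on the graded pieces.

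The main obstacle lies in the clause \emph{``these tensors reduce to a $\parabolic$-torsor structure''}: a priori, the crystalline tensors giving the $G$-structure need not preserve the filtration $\D(\lift{\BT}^\bullet)$. What saves the argument is the universal Hodge-Newton filtration supplied by facts \ref{Hodge-Newton filtration} and \ref{lifting the Hodge-Newton filtration}: any $G$-structured deformation of $\generalhodge{\BT}$ admits a unique filtration lifting the canonical Hodge-Newton filtration at the closed point, and by uniqueness this must agree with the filtration $\lift{\BT}^\bullet_{\EL{\parabolic},b}$ pulled back via the closed embedding $\RZ_{\parabolic,b}\hookrightarrow\RZ_{\EL{\parabolic},b}$. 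The tensors then preserve this filtration by canonicity, and the $\levi_j$-structures on each graded piece produced by fact \ref{levi local shimura data} piece together into the required $\levi$-structure on the associated graded. Once these compatibilities are verified at the level of rigid analytic families, the Cartesian identification becomes formal and the lemma follows from Mantovan's EL-type result.
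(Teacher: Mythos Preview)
Your overall strategy of restricting Mantovan's EL-type diagram to the Hodge-type subspaces matches the paper's approach, and your treatment of the forward direction---showing that $\EL{\pi}_1|_{\RZ_{\parabolic,b}}$ lands in $\RZ_{\levi,b}$ via the Hodge-Newton filtration and its unique lift---is essentially the argument the paper gives (checked on $\overline{\F}_p$-points and formal completions using the facts in \ref{Hodge-Newton decomposition and filtration}). But the Cartesian identification you propose requires also the \emph{converse} inclusion, and your justification for it is a genuine gap.

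Your claim that ``a $\parabolic$-structure is tautologically a $G$-structure together with a compatible $\levi$-structure on the graded pieces'' only runs in one direction: a $G$-structure plus the filtration yields a $\parabolic$-structure (this is \ref{RZ space associated to parabolic}\ref{parabolic RZ space parabolic action condition}), and hence an $\levi$-structure on the graded. But a point of the fiber product $\RZ_{\EL{\parabolic},b}^\rig \times_{\RZ_{\EL{\levi},b}^\rig} \RZ_{\levi,b}^\rig$ gives you only a filtered object in $\RZ_{\EL{\parabolic},b}^\rig$ together with crystalline tensors on $\mathrm{gr}\,\D(\lift{\BT})$; to land in $\RZ_{\parabolic,b}^\rig$ you must \emph{produce} Frobenius-invariant tensors on $\D(\lift{\BT})$ itself, and there is no canonical way to lift tensors through a nonsplit filtration. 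Concretely, for $G=\GSp_{2n}$ the defining tensor is a symplectic form pairing the two graded pieces against each other; given only the induced perfect pairing on the graded, extending it to a Frobenius-equivariant form on the total Dieudonn\'e module depends on a splitting and need not exist. The paper sidesteps this entirely: it never claims the Cartesian identification, but having shown that $\pi_1$ factors through $\RZ_{\levi,b}$, it deduces the fibration-in-balls property directly from the fact that the formal completion of $\RZ_{\parabolic,b}$ at any closed point is a deformation space $\Def_{\BT_x,G}$, hence the formal spectrum of a power series ring by Faltings.
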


\begin{proof}
For notational simplicity, we assume that $r=2$, i.e., the decomposition of $\EL{\levi}$ in \eqref{EL levi decomp} has two factors. Our argument will naturally extend to the general case.

Note that we have already constructed $\pi_2$ and proved \ref{key lemma pi_2} in \ref{RZ space associated to parabolic}.

Let us now prove \ref{key lemma s}. From the decomposition $\EL{\levi} = \EL{\levi}_1 \times \EL{\levi}_2$ we obtain a natural isomorphism $\RZ_{\EL{\levi}, b} \simeq \RZ_{\EL{\levi}_1, b_1} \times \RZ_{\EL{\levi}_2, b_2}$ by Proposition \ref{functoriality of rapoport-zink spaces}. Consider the map 
\[\EL{s}: \RZ_{\EL{\levi}, b} \simeq \RZ_{\EL{\levi}_1, b_1} \times \RZ_{\EL{\levi}_2, b_2} \longrightarrow \RZ_{\EL{\parabolic}, b}\]
where the second arrow is defined by $(\lift{\BT}_1, \genqisog_1, \lift{\BT}_2, \genqisog_2) \mapsto (\lift{\BT}_1 \times \lift{\BT}_2, 0 \subset \lift{\BT}_2 \subset \lift{\BT}_1 \times \lift{\BT}_2, \genqisog_1 \times \genqisog_2)$ on the points. Then $\EL{s}$ gives a closed immersion on the rigid analytic generic fibers by \cite{Mantovan08}, Proposition 14. We define $s$ to be the restriction of $\EL{s}$ on $\RZ_{\levi, b}$. Since $s$ also gives a closed immersion on the rigid analytic generic fibers by construction, it suffices to show that $s$ factors through the embedding $\RZ_{\parabolic, b} \hooklongrightarrow \RZ_{\EL{\parabolic}, b}$, which amounts to proving that $\EL{\pi}_2 \circ s$ factors through $\RZ_{G, b}$. In fact, $\EL{\pi}_2 \circ s$ is the natural closed embedding $\RZ_{\levi, b} \hooklongrightarrow \RZ_{\EL{G}, b}$ which is functorially induced by the embedding $\levi \hooklongrightarrow \EL{G}$ in the sense of Proposition \ref{functoriality of rapoport-zink spaces}. Hence $\EL{\pi}_2 \circ s$ factors through $\RZ_{G, b}$ as the embedding $\levi \hooklongrightarrow \EL{G}$ factors through $G$.

It remains to prove \ref{key lemma pi_1}. Note that we have a natural embedding 
\[\RZ_{\levi, b} \hooklongrightarrow \RZ_{\EL{\levi}, b}\] 
which is functorially induced by the embedding $\levi \hooklongrightarrow \EL{\levi}$ in the sense of Proposition \ref{functoriality of rapoport-zink spaces}. Consider the map 
\[\EL{\pi}_1: \RZ_{\EL{\parabolic}, b} \longrightarrow  \RZ_{\EL{\levi}_1, b_1} \times \RZ_{\EL{\levi}_2, b_2} \stackrel{\sim}{\longrightarrow} \RZ_{\EL{\levi}, b}\]
defined by $(\lift{\BT}, \lift{\BT}^\bullet, \genqisog) \mapsto (\lift{\BT}/\lift{\BT}^{(2)}, \genqisog/\genqisog^{(2)}, \lift{\BT}^{(2)}, \genqisog^{(2)}) \mapsto ((\lift{\BT}/\lift{\BT}^{(2)}) \times \lift{\BT}^{(2)}, (\genqisog/\genqisog^{(2)}) \times \genqisog^{(2)})$ on the points, where $\genqisog/\genqisog^{(2)}: (\BT_1)_{\genring/p} = (\BT/\BT^{(2)})_{\genring/p} \longrightarrow (\lift{\BT}/\lift{\BT}^{(2)})_{\genring/p}$ is a quasi-isogeny induced by $\genqisog$ and $\genqisog^{(2)}$. We define $\pi_1$ be the restriction of $\EL{\pi}_1$ on $\RZ_{\parabolic, b}$.

We claim that $\pi_1$ factor through the embedding $\RZ_{\levi, b} \hooklongrightarrow \RZ_{\EL{\levi}, b}$. It suffices to show that (locally) the map $\pi_2\inv \circ \pi_1$ factors through $\RZ_{\levi, b} \hooklongrightarrow \RZ_{\EL{\levi}, b}$. We only need to check this on the set of $\overline{\F}_p$-points and the completions thereof. On the set of $\overline{\F}_p$-points, $\pi_2\inv \circ \pi_1$ coincides with the map in \ref{Hodge-Newton decomp affine Deligne-Lusztig sets} of \ref{Hodge-Newton decomposition and filtration} and thus factors through $\RZ_{\levi, b}$. On the completion $\completion{(\RZ_{G, b})_x}$ at $x \in \RZ_{\BT, G}(\overline{\F}_p)$, we get a map 
\[\Def_{\BT_x, G} \longrightarrow \Def_{\BT_{x_1}, \EL{\levi}_1} \times \Def_{\BT_{x_2}, \EL{\levi}_2} \simeq \Def_{\BT_{x_1} \times \BT_{x_2}, \EL{\levi}_1 \times \EL{\levi}_2} = \Def_{\BT_x, \EL{\levi}}\]
induced by the association $\deform{\BT}_x \mapsto (\deform{\BT}_x/\deform{\BT}_x^{(2)}) \times \deform{\BT}_x^{(2)}$. Note that $(\deform{\BT}_x/\deform{\BT}_x^{(2)}) \times \deform{\BT}_x^{(2)}$ is a deformation of $\BT_x$ via the isomorphism $\alpha_1 \times \alpha^{(2)}$ in \ref{lifting the Hodge-Newton filtration} of \ref{Hodge-Newton decomposition and filtration}. Since this isomorphism is induced by $\alpha$, we see that $(\deform{\BT}_x/\deform{\BT}_x^{(2)}) \times \deform{\BT}_x^{(2)}$ lifts the tensors that define $G$-structure on $\BT_x$. Hence the image of the above map must lie in $\Def_{\BT_x, \EL{\levi}} \cap \Def_{\BT_x, G} = \Def_{\BT_x, \levi}$.

Finally, we easily see that $\pi_1$ is a fibration in balls. In fact, for any point $x \in \RZ_{\levi, b}(\overline{\F}_p)$ the completion of $\RZ_{\parabolic, b}$ at $s(x)$ is isomorphic to a formal deformation space of $\BT_x$ with Tate tensors, which is isomorphic to a formal spectrum of  a power series ring over $\completemaxunramint$ as proved in \cite{Faltings99}, \S7 (see also \cite{Moonen98}, \S4.). 
\end{proof}

\begin{prop} \label{comparing cohomologies of levi and parabolic tower} For any admissible $l$-adic representation $\rho$ of $J_b(\Q_p)$, we have
\[ H^\bullet(\RZ^\infty_{\levi, b})_\rho = H^\bullet(\RZ^\infty_{\parabolic, b})_\rho \]
as virtual representations of $\parabolic(\Q_P) \times \weilgroup{\localreflexfield}$. 
\end{prop}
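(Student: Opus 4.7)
The strategy is to exploit Lemma \ref{diagram of three towers} in close parallel with Mantovan's argument in \cite{Mantovan08} for the EL case, converting the geometric fibration $\pi_1$ into a cohomological comparison between the two towers.

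First I would extend $\pi_1$ to a morphism of towers. For any open compact subgroup $\levelatp \subset \levi(\Z_p)$ and any open compact $\levelatp' \subset \parabolic(\Z_p)$ whose image under the projection $\parabolic(\Z_p) \twoheadrightarrow \levi(\Z_p)$ is $\levelatp$ and which contains $\unipotent(\Z_p) \cap \levelatp'$, the universal filtered trivialization of $\tatemodule{\lift{\BT}^\bullet_{\parabolic, b}}$ used to define $\RZ_{\parabolic, b}^{\levelatp'}$ projects onto the graded Levi-trivialization defining $\RZ_{\levi, b}^{\levelatp}$. This produces a natural morphism $\pi_1^{\levelatp'}: \RZ_{\parabolic, b}^{\levelatp'} \to \RZ_{\levi, b}^{\levelatp}$ compatible with the actions of $J_b(\Q_p)$ and $\weilgroup{\localreflexfield}$, and compatible with varying the level. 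A local analysis using part \ref{lifting the Hodge-Newton filtration} of \ref{Hodge-Newton decomposition and filtration} shows that $\pi_1^{\levelatp'}$ is again a fibration in open balls of the same relative dimension $d := \dim \unipotent$ as $\pi_1$.

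Next I would invoke the standard fact (see, e.g.\ \cite{Mantovan08}, Corollary 22, and \cite{Berthelot96}) that a smooth fibration $f : X \to Y$ of rigid analytic spaces whose fibers are $d$-dimensional open balls satisfies $Rf_{!}\Q_l \simeq \Q_l(-d)[-2d]$, and therefore induces canonical isomorphisms
\[ H^i_c\bigl(X \otimes_{\completemaxunram} \C_p, \Q_l(\dim X)\bigr) \stackrel{\sim}{\longrightarrow} H^{i-2d}_c\bigl(Y \otimes_{\completemaxunram} \C_p, \Q_l(\dim Y)\bigr). \]
Applying this to each $\pi_1^{\levelatp'}$ yields $\parabolic(\Q_p) \times J_b(\Q_p) \times \weilgroup{\localreflexfield}$-equivariant isomorphisms $H^i(\RZ_{\parabolic, b}^{\levelatp'}) \simeq H^{i-2d}(\RZ_{\levi, b}^{\levelatp})$, where the action on the right factors through the projection $\parabolic(\Q_p) \twoheadrightarrow \levi(\Q_p)$.

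Finally, I would pass to the colimit over $\levelatp'$ (cofinally indexed by $\levelatp$), apply $\Ext^j_{J_b(\Q_p)}(-, \rho)$, and form the alternating sum $\sum_{i,j} (-1)^{i+j}$ defining the virtual representation. Re-indexing $i \mapsto i-2d$ in the sum absorbs the cohomological shift since $2d$ is even, producing the claimed equality. I expect the main obstacle to be step (i): confirming both that $\pi_1^{\levelatp'}$ genuinely inherits the fibration-in-balls structure of $\pi_1$ at the level of étale covers, and that the unipotent radical $\unipotent(\Q_p) \subset \parabolic(\Q_p)$ acts trivially on the resulting cohomology. Both amount to checking compatibility with the Hodge-Newton filtration on the universal filtered $p$-divisible group; the triviality of the $\unipotent(\Q_p)$-action then follows because $\unipotent(\Q_p)$ preserves the fibers of $\pi_1^{\levelatp'}$ and acts on each contractible ball fiber, whose compactly supported cohomology is a Tate module.
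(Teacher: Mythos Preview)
Your overall strategy is the right one, and indeed parallels the paper's own proof: lift $\pi_1$ to the towers, use the fibration-in-balls property to get a cohomological shift, and collapse the alternating sum. However, the step you flag as the main obstacle really is one, and your treatment of it has a gap. The condition you impose on $\levelatp'$, namely that it ``contains $\unipotent(\Z_p)\cap\levelatp'$'', is vacuous; presumably you meant $\unipotent(\Z_p)\subset\levelatp'$. If you do impose $\unipotent(\Z_p)\subset\levelatp'$, then the fibres of $\pi_1^{\levelatp'}$ are indeed single balls (a filtered trivialisation modulo $\levelatp'$ is then determined by its associated graded), but such subgroups are \emph{not} cofinal in the open compact subgroups of $\parabolic(\Z_p)$, so the colimit you take does not compute $H^\bullet(\analtower{\RZ_{\parabolic,b}})_\rho$ and the $\parabolic(\Q_p)$-action is not recovered. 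If instead you allow $\levelatp'$ deep enough to be cofinal (e.g.\ principal congruence subgroups), then a filtered trivialisation of the Tate module modulo $\levelatp'$ genuinely remembers more than its graded, and the fibre of $\pi_1^{\levelatp'}$ over a point of $\RZ_{\levi,b}^{\levelatp}$ is not a single ball but a finite disjoint union of balls indexed by lifts of the graded level structure to a filtered one; the appeal to \ref{Hodge-Newton decomposition and filtration}\ref{lifting the Hodge-Newton filtration} (which concerns formal deformations, not \'etale level structures on the rigid fibre) does not address this.

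The paper circumvents exactly this difficulty by importing Mantovan's auxiliary construction: for each $m>0$ one pulls back the formal scheme $\RZ_{\EL\parabolic,b}^{(m)}\to\RZ_{\EL\parabolic,b}$ (classifying the locus where the filtration on $p^m$-torsion splits) to obtain $\RZ_{\parabolic,b}^{(m)}\to\RZ_{\parabolic,b}$. One then compares, at principal congruence level $\levelatp'^{(m)}$, the pullback $\mathcal P_m$ of the parabolic cover to $\RZ_{\parabolic,b}^{(m),\rig}$ with the pullback $\mathcal P'_m$ of the Levi cover along $\pi_1$. The splitting over $\RZ_{\parabolic,b}^{(m)}$ makes filtered and graded level structures match, and Mantovan's Lemma~31 and Proposition~32 give $R\Gamma_c(\RZ_{\parabolic,b}^{\levelatp'^{(m)}})\cong R\Gamma_c(\mathcal P'_m)$; since $\mathcal P'_m\to\RZ_{\levi,b}^{\levelatp'^{(m)}}$ is a genuine fibration in balls (it is a base change of $\pi_1$), the comparison follows. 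You should incorporate this splitting-locus argument in place of the unproven assertion that $\pi_1^{\levelatp'}$ is itself a ball fibration.
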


\begin{proof}
For any open compact subgroups $\levelatp' \subseteq \parabolic(\Z_p)$, we get morphisms of rigid analytic spaces
\[ s_{\levelatp'} : \RZ_{\levi, b}^{\levelatp' \cap \levi(\Q_p)} \longrightarrow \RZ_{\parabolic, b}^{\levelatp'} \quad \text{ and } \quad \pi_{1, \levelatp'}: \RZ_{\parabolic, b}^{\levelatp'} \longrightarrow \RZ_{\levi, b}^{\levelatp' \cap \levi(\Q_p)}\]
which are $\parabolic(\Q_p) \times J_b(\Q_p)$-equivariant and compatible with the Weil descent datum. Moreover, $s_{\levelatp'}$'s are closed immersions and satisfy $\pi_{1, \levelatp'} \circ s_{\levelatp'} = \text{id}_{\RZ_{\levi, b}^{\levelatp' \cap \levi(\Q_p)}}$.

Recall that we have a universal $p$-divisible group $\lift{\BT}_{\EL{\parabolic}, b}$ over $\RZ_{\BT, \EL{\parabolic}}$ with the associated filtration $\lift{\BT}^\bullet_{\EL{\parabolic}, b}$. By \cite{Mantovan08}, Proposition 30, we have a formal scheme $\RZ_{\EL{\parabolic}, b}^{(m)} \longrightarrow \RZ_{\EL{\parabolic}, b}$ for each integer $m>0$ with the following properties:
\begin{enumerate}[label=(\roman*)]
\item a morphism $f: \Spf(\genring) \longrightarrow \RZ_{\EL{\parabolic}, b}$ for some $\genring \in \Nilp_{\completemaxunramint}$ factors through $\RZ_{\EL{\parabolic}, b}^{(m)}$ if and only if the filtration $f^*\lift{\BT}^\bullet_{\EL{\parabolic}, b}[p^m]$ is split, 
\item the formal schemes $\RZ_{\EL{\parabolic}, b}^{(m)}$ and $\RZ_{\EL{\parabolic}, b}$ become isomorphic when considered as formal schemes over $\RZ_{\EL{\levi}, b}$ via the map $\EL{\pi}_1: \RZ_{\EL{\parabolic}, b} \longrightarrow \RZ_{\EL{\levi}, b}$. 
\end{enumerate}
Taking the pull back of $\RZ_{\EL{\parabolic}, b}^{(m)}$ over $\RZ_{\parabolic, b}$, we obtain a formal scheme $\RZ_{\parabolic, b}^{(m)} \longrightarrow \RZ_{\parabolic, b}$ for each integer $m>0$ with analogous properties. We write $\RZ_{\parabolic, b}^{(m), \rig}$ for the rigid analytic generic fiber of $\RZ_{\parabolic, b}^{(m)}$.

For each integer $m>0$, we set $\levelatp'^{(m)}:= \ker\big(\parabolic(\Z_p) \twoheadrightarrow \parabolic(\Z_p/p^m\Z_p)\big)$ and define two distinct covers
$\mathcal{P}_m \longrightarrow \RZ_{\parabolic, b}^{(m)}$ and $\mathcal{P}'_m \longrightarrow \RZ_{\parabolic, b}^{(m)}$ by the following Cartesian diagrams:
\begin{center}
\begin{tikzpicture}[description/.style={fill=white,inner sep=4pt}]
\matrix (m) [matrix of math nodes, row sep=4em,
column sep=3em, text height=1.5ex, text depth=0.7ex]
{ \mathcal{P}_m & \RZ_{\parabolic, b}^{(m), \rig} \\
\RZ_{\parabolic, b}^{\levelatp'^{(m)}} & \RZ_{\parabolic, b}^\rig\\
};
\path[-stealth]
    (m-1-1.east) edge (m-1-2)
    (m-2-1.east) edge (m-2-2)
    (m-1-1.south) edge (m-2-1)
    (m-1-2.south) edge (m-2-2);
\end{tikzpicture}
\quad
\begin{tikzpicture}[description/.style={fill=white,inner sep=4pt}]
\matrix (m) [matrix of math nodes, row sep=4em,
column sep=3em, text height=2.3ex, text depth=0.25ex]
{ \mathcal{P}'_m & \RZ_{\parabolic, b}^{(m), \rig} \\
\RZ_{\levi, b}^{\levelatp'^{(m)}} & \RZ_{\levi, b}^\rig\\
};
\path[-stealth]
    (m-1-1.east) edge (m-1-2)
    (m-2-1.east) edge (m-2-2)
    (m-1-1.south) edge (m-2-1)
    (m-1-2.south) edge node [right] {$\pi_1$} (m-2-2);
\end{tikzpicture}
\end{center}
Since $\pi_1$ is a fibration in balls, we obtain quasi-isomorphisms
\[ R\Gamma_c(\mathcal{P}'_m \otimes_{\completemaxunram} \C_p, \overline{\Q}_l) \cong R\Gamma_c(\RZ_{\levi, b}^{\levelatp'^{(m)}} \otimes_{\completemaxunram} \C_p, \overline{\Q}_l(-D))[-2D]\quad \text{ for all }m>0\]
where $D = \dim \RZ_{\parabolic, b} - \dim \RZ_{\levi, b}$. Moreover, we can argue as in \cite{Mantovan08}, Lemma 31 and Proposition 32 to deduce quasi-isomorphisms 
\[ R\Gamma_c(\RZ_{\parabolic, b}^{\levelatp'^{(m)}} \otimes_{\completemaxunram} \C_p, \overline{\Q}_l) \cong R\Gamma_c(\mathcal{P}'_m \otimes_{\completemaxunram} \C_p, \overline{\Q}_l) \quad \text{ for all }m>0.\]
Thus we have quasi-isomorphisms
\[ R\Gamma_c(\RZ_{\parabolic, b}^{\levelatp'^{(m)}} \otimes_{\completemaxunram} \C_p, \overline{\Q}_l) \cong R\Gamma_c(\RZ_{\levi, b}^{\levelatp'^{(m)}} \otimes_{\completemaxunram} \C_p, \overline{\Q}_l(-D))[-2D] \quad \text{ for all }m >0,\]
which yield the desired equality.  \end{proof}

\begin{prop} \label{comparing cohomologies of parabolic and original tower} For any admissible $l$-adic representation $\rho$ of $J_b(\Q_p)$, we have
\[ H^\bullet(\RZ^\infty_{G, b})_\rho = \text{Ind}_{\parabolic(\Q_p)}^{G(\Q_p)} H^\bullet(\RZ^\infty_{\parabolic, b})_\rho \]
as virtual representations of $\parabolic(\Q_P) \times \weilgroup{\localreflexfield}$. 

\end{prop}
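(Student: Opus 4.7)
The plan is to realize the tower $\analtower{\RZ_{G, b}}$ geometrically as the parabolic induction of $\analtower{\RZ_{\parabolic, b}}$ and then pass to $\ell$-adic cohomology. Under the isomorphism $\pi_2: \RZ_{\parabolic, b}^\rig \stackrel{\sim}{\to} \RZ_{G, b}^\rig$ from Lemma \ref{diagram of three towers}, we view both towers as \'etale covers of a common rigid analytic base. At infinite level, $\RZ_{G, b}^\infty$ is a $G(\Z_p)$-torsor over this base parametrizing trivializations of the Tate module of the universal $p$-divisible group matching the tensors $(s_i)$ and $(\liftlow{t}_{i, \et}^\univ)$, while $\RZ_{\parabolic, b}^\infty$ is the distinguished $\parabolic(\Z_p)$-subtorsor of those trivializations that additionally match $\module^\bullet$ with the filtration on the Tate module coming from $\lift{\BT}^\bullet_{\parabolic, b}$. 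Since $\parabolic(\Z_p)$ is the stabilizer in $G(\Z_p)$ of $\module^\bullet$, the $G(\Z_p)$-translates of $\RZ_{\parabolic, b}^\infty$ exhaust $\RZ_{G, b}^\infty$.

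The main step is to upgrade this to a $G(\Q_p)$-equivariant decomposition at each finite level. Using the natural $G(\Q_p)$-action on $\analtower{\RZ_{G, b}}$ that restricts to the $\parabolic(\Q_p)$-action on $\analtower{\RZ_{\parabolic, b}}$, I would prove that for each open compact $\levelatp \subseteq G(\Z_p)$ there is a decomposition
\[\RZ_{G, b}^\levelatp \;\cong\; \bigsqcup_{g \in \parabolic(\Q_p) \backslash G(\Q_p) / \levelatp} g \cdot \RZ_{\parabolic, b}^{\,g^{-1} \levelatp g \,\cap\, \parabolic(\Q_p)}\]
as rigid analytic spaces over $\localreflexfield$, compatibly with the $J_b(\Q_p)$-action and the Weil descent datum. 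The indexing set is finite because $G/\parabolic$ is projective, so $\parabolic(\Q_p) \backslash G(\Q_p)$ is compact; disjointness and exhaustion can be checked fiber-by-fiber over $\RZ_{G, b}^\rig$, where they reduce to the torsor description above together with the fact that $\parabolic(\Q_p)$ is the stabilizer of $\module^\bullet$ in $G(\Q_p)$.

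Passing to $\ell$-adic cohomology, applying $\Ext^j_{J_b(\Q_p)}(-, \rho)$ summand-by-summand and taking the direct limit in $\levelatp$ yields
\[H^{i, j}(\analtower{\RZ_{G, b}})_\rho \;=\; \varinjlim_\levelatp \bigoplus_{g} \Ext^j_{J_b(\Q_p)}\!\bigl( H^i\bigl(\RZ_{\parabolic, b}^{g^{-1} \levelatp g \,\cap\, \parabolic(\Q_p)}\bigr),\, \rho\bigr),\]
which by the standard formula $(\text{Ind}_\parabolic^G V)^\levelatp = \bigoplus_{g \in \parabolic \backslash G / \levelatp} V^{g^{-1} \levelatp g \,\cap\, \parabolic}$ is exactly the direct limit of $\levelatp$-invariants in the smooth $G(\Q_p)$-representation $\text{Ind}_{\parabolic(\Q_p)}^{G(\Q_p)} H^{i, j}(\analtower{\RZ_{\parabolic, b}})_\rho$ (compact and full induction agree since $G/\parabolic$ is compact). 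Summing with signs $(-1)^{i+j}$ then gives the virtual representation identity. The main obstacle is the geometric decomposition at finite level: one must construct it compatibly with the Weil descent datum and the $J_b(\Q_p)$-action, and verify that the combinatorial indexing set $\parabolic(\Q_p) \backslash G(\Q_p) / \levelatp$ emerges precisely from the $G(\Q_p)$-translates of $\RZ_{\parabolic, b}^\infty$ inside $\RZ_{G, b}^\infty$, so that it matches the structure built into parabolic induction.
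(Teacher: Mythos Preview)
Your proposal is correct and follows essentially the same route as the paper: both arguments use the isomorphism $\pi_2$ to identify the bases, then decompose $\RZ_{G,b}^{\levelatp}$ as a disjoint union of translates of spaces in the $\parabolic$-tower indexed by the double cosets $\parabolic(\Q_p)\backslash G(\Q_p)/\levelatp$, and read off parabolic induction from this geometric decomposition. The paper's proof is terser (writing all the pieces at a single level $\levelatp\cap\parabolic(\Q_p)$), whereas you correctly keep track of the twisted levels $g^{-1}\levelatp g\cap\parabolic(\Q_p)$ on each component and spell out the passage to induction via the Mackey-type formula for $\levelatp$-invariants; this makes no difference once one restricts to a cofinal system of normal $\levelatp$, but your version is more precise as written.
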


\begin{proof}
For any open compact subgroup $\levelatp \subseteq G(\Z_p)$, we have natural morphisms of rigid analytic spaces
\[\pi_{2, \levelatp}: \RZ_{\parabolic, b}^{\levelatp \cap \parabolic(\Q_p)} \longrightarrow \RZ_{G, b}^\levelatp\]
which are $\parabolic(\Q_p) \times J_b(\Q_p)$-equivariant and compatible with the Weil descent datum. Moreover, these maps are evidently closed immersions. Hence we have isomorphisms
\[ \RZ_{G, b}^\levelatp \cong \RZ_{G, b}^\levelatp \times_{\RZ_{G, b}^\rig} \RZ_{\parabolic, b}^\rig \cong \coprod_{\levelatp \backslash G(\Q_p) / \parabolic(\Q_p) } \RZ_{\parabolic, b}^{\levelatp \cap \parabolic(\Q_p)}\quad \text{for all }\levelatp \subseteq G(\Z_p),\]
thereby obtaining the desired identity. \end{proof}

Proposition \ref{comparing cohomologies of levi and parabolic tower} and \ref{comparing cohomologies of parabolic and original tower} together imply Theorem \ref{HV conjecture}.

\bigskip



 \end{document}